\newtheorem{theorem}{Theorem}[section]
\newtheorem{corollary}[theorem]{Corollary}
\newtheorem{example}{Example}
\newtheorem{lemma}[theorem]{Lemma}
\newtheorem{proposition}[theorem]{Proposition}
\theoremstyle{remark}
\newtheorem{remark}[theorem]{Remark}
\numberwithin{equation}{section}
\newcommand{\N}{\mathbb{N}}
\newcommand{\Q}{\mathbb{Q}}
\newcommand{\R}{\mathbb{R}}
\newcommand{\Z}{\mathbb{Z}}
\newcommand{\Hcali}{\mathcal{H}}
\newcommand{\Rcali}{\mathcal{R}}
\newcommand{\Ucali}{\mathcal{U}}
\newcommand{\Zp}{\mathbb{Z}_p}
\newcommand\rquot[2]{
  \mathchoice
  {% \displaystyle
    \text{\raise0.5ex\hbox{$#1$}\big/\lower0.5ex\hbox{$#2$}}%
  }
  {% \textstyle
    #1\,/\,#2
  }
  {% \scriptstyle
    #1\,/\,#2
  }
  {% \scriptscriptstyle
    #1\,/\,#2
  }
}
\newcommand\lrquot[3]{
  \mathchoice
  {% \displaystyle
    \text{\lower0.5ex\hbox{$#1$}\big\backslash\raise0.5ex\hbox{$#2$\!}\big/
      \lower0.5ex\hbox{\!\!$#3$}}%
  }
  {% \textstyle
    #1\,\backslash\,#2\,/\,#3
  }
  {% \scriptstyle
    #1\,\backslash\,#2\,/\,#3
  }
  {% \scriptscriptstyle
    #1\,\backslash\,#2\,/\,#3
  }
}
\newcommand\lquot[2]{
  \mathchoice
  {% \displaystyle
    \text{\lower0.5ex\hbox{$#1$}\big\backslash\raise0.5ex\hbox{$#2$}}%
  }
  {% \textstyle
    #1\,\backslash\,#2
  }
  {% \scriptstyle
    #1\,\backslash\,#2
  }
  {% \scriptscriptstyle
    #1\,\backslash\,#2
  }
}
  \DeclareFontFamily{U}{wncy}{}
    \DeclareFontShape{U}{wncy}{m}{n}{<->wncyr10}{}
    \DeclareSymbolFont{mcy}{U}{wncy}{m}{n}
    \DeclareMathSymbol{\Sha}{\mathord}{mcy}{"58}
\begin{document}

\title{The Lyapunov spectrum for Schneider map on $p\Zp$}

\date{\today}

%\subjclass[2010]{37D35, 37A10, 37A35}

%\thanks{-.}
\subjclass[2020]{Primary: 37C45; Secondary:11J70, 37P30}
\keywords{$p$-adic continued fractions, Lyapunov sprectrum, thermodynamic formalism}

\author{ Matias Alvarado and Nicol\'as Ar\'evalo-Hurtado}	
\address{Instituto de Matem\'aticas, Universidad de Talca, Talca, Chile.}
\email[M. Alvarado]{matias.alvarado@utalca.cl}
\address{Universidad Escuela Colombiana de Ingenier\'ia Julio Garavito, Bogot\'a, Colombia.}

%\email{\href{narevalo1@mat.uc.cl}{narevalo1@uc.cl}, \href{-}{-}}
\email[N. Ar\'evalo]{nicolas.arevalo-h@escuelaing.edu.co}
\urladdr{\url{https://sites.google.com/view/nicolasarevalomath/}}

\begin{abstract}
We study the thermodynamic formalism associated with the Schneider map on the p-adic integers 
$p\Z_p$	. By introducing a geometric potential that captures the expansion of cylinder sets generated by the map, we define a Lyapunov exponent adapted to this non-Archimedean setting. We investigate the corresponding Lyapunov spectrum and show that it is real analytic on its natural domain. Moreover, we obtain an explicit closed formula for the spectrum. As a consequence, we recover and refine known results on the Hausdorff dimension of sets defined by a prescribed asymptotic arithmetic mean of the continued fraction digits. Finally, we relate the Lyapunov exponent to the exponential rate of convergence of rational approximations arising from truncations of the Schneider continued fraction expansion. This provides a 
p-adic analogue of classical results from Diophantine approximation and yielding precise dimension formulas for the associated level sets.

\end{abstract}

\maketitle

\section{Introduction}
A central problem in multifractal analysis concerns the fractal decomposition of level sets of dynamically defined functions, typically quantified via their Hausdorff dimension. Let $f:M\to M$ be a $C^{1}$ map on a compact manifold $M$. The Lyapunov exponent of $f$ at a point $x\in M$ is defined by 
\begin{align}\label{Lyapunov exponent definition}
    \lambda(x)=\lim_{n\rightarrow{\infty}}\dfrac{1}{n}\log \|Df^{n}(x)\|,
\end{align} whenever this limit exists. This quantity measures the exponential rate at which nearby orbits diverge.  For each real number $\alpha$, we define

$$J_{\alpha}=\{x\in M: \lambda(x)=\alpha\}.$$ 
By the chain rule, $\lambda(x)$ can be expressed as the pointwise limit of Birkhoff averages of $\log \|Df\|$. Consequently, for any ergodic measure, the Lyapunov exponent is constant almost everywhere. In particular, a single ergodic measure cannot capture the geometric properties of different level sets $J_{\alpha_1}$ and $J_{\alpha_2}$. This motivates the study of these sets through their Hausdorff dimension. The Lyapunov spectrum of $\alpha$ is defined by
\begin{align*}
    L(\alpha)=\dim_{\mathrm{H}}\left(J_{\alpha}\right),
\end{align*} whenever $J_{\alpha}\neq \emptyset$, where $\dim_{\mathrm{H}}(\cdot)$ denotes the Hausdorff dimension. 

For conformal expanding maps on smooth manifolds with H\"older continuous derivative, this problem dates back to the work of Weiss  \cite{we}. Using tools such as Markov partitions and Gibbs measures, Weiss proved that $L$ is defined on a bounded interval $\left[\alpha_{min},\alpha_{max}\right]$, is real analytic, and satisfies
\begin{align}\label{EcuWeiss}
    L(\alpha)=\dfrac{1}{\alpha}\inf\left\{P\left(-t\log\|Df\|\right)+t\alpha:t\in \R\right\},
\end{align} 
where $P(-t\log\|Df\|)$ denotes the topological pressure associated with the potential $-t\log\|Df\|$.
For interval maps, Pollicott and Weiss \cite{mh}, and later  Kesseb\"ohmer and Stratmann \cite{kessestrat}, extended this result to the Gauss map $G$. This map has been extensively studied due to its intimate connection with the continued fraction expansion of real numbers.

The Gauss map $G: (0,1]\rightarrow [0,1]$ is defined by $$G(x)=\dfrac{1}{x}-\left\lfloor \dfrac{1}{x}\right\rfloor,$$ where $\lfloor \cdot \rfloor$ denotes the integer part.
Every irrational $x\in (0,1)$ admits a unique continued fraction expansion  of the form
  \begin{align}\label{continued}
      x= \cfrac{1}{a_{1}+\cfrac{1}{a_{2}+\cfrac{1}{a_{3}+\cdots }}}=[a_{1},a_{2},a_{3},...],
  \end{align} where each $a_{i}$ is a positive integer and $a_{i}=\left\lfloor \frac{1}{G^{i-1}x}\right\rfloor$. 
The $n$-th rational approximation of $x$ is given by
  \begin{align*}
      \dfrac{P_n}{Q_{n}}=\cfrac{1}{a_{1}+\cfrac{1}{a_{2}+\cfrac{1}{a_{3}+\dfrac{1}{\ddots +\frac{1}{a_{n}}} }}}=[a_{1},a_{2},...,a_{n}].
  \end{align*} 
The definition of the Lyapunov exponent extends naturally to the Gauss map. For $x\in (0,1]$ we define $\lambda(x)=\lim_{n\to \infty}\frac{1}{n}\log |(G^{n})'(x)|$, whenever this limit exists. Moreover, $\lambda(x)$ can be written as  (see \cite[Page 160]{mh})
  \begin{align}\label{EcuLyaAsRationalApprox}
      \lambda(x)=-\lim_{n\rightarrow \infty}\dfrac{1}{n}\log \left|x-\dfrac{P_{n}}{Q_{n}}\right|.
  \end{align} 
By equation \eqref{EcuLyaAsRationalApprox}, $\lambda(x)$ quantifies the exponential rate at which the rational approximations $P_n/Q_n$ converge to 
$x$, linking Lyapunov exponents to classical Diophantine approximation theory. For Lebesgue-almost every $x\in (0,1)$, one has $\lambda(x)=\frac{\pi^{2}}{6\log 2}$ since $\lambda
(x)$ is a pointwise limit of Birkhoff averages. The domain of the Lyapunov spectrum $L$ for the Gauss map is $\left(2\log\left(\frac{\sqrt{5}-1}{2}\right),\infty\right)$. since $(0,1)\setminus \mathbb{Q}$ is a non-compact space, this leads to a different situation from the one considered by Weiss. Nevertheless, $L$ is a real analytic function, and for each $\alpha$ in this domain, there exists a unique ergodic measure supported on $J_{\alpha}$.

In this paper, we extend the study of Lyapunov spectra and their Diophantine implications to the Schneider map on $p\Zp$. 

Let $p$ be a prime number and let $v_p$ denote the $p$-adic valuation on $\Q$, defined by the property that for each $x\in \Q$, there exists a unique integer
$v_p(x)$ such that $x=p^{v_p(x)}\frac{m}{n}$ with $(p,mn)=1$. The $p$-adic absolute value on $\Q$ is defined by $|0|_p=0$, and $|x|_p=p^{-v_p(x)}$ if $x\neq 0$. The field $\Q_p$ of $p$-adic numbers is the completion of $\Q$ with respect to this norm. The set of all elements $x\in \Q_p$ such that $|x|_p\leq 1$ is denoted by $\Z_p$, and the subset $p\Z_p$ consists of those with $|x|_p<1$. Explicitly, any element in $\Q_p$ can be expressed as $\sum_{n\geq n_0}c_np^{n}$, where $n_0$ is an integer depending only on $x$ and $c_n\in \{0,1,...,p-1\}$. An element $x$ in $\Z_p$ can be written as $\sum_{n\geq0} c_np^n$. Thus, $p\Zp$ consists of all elements in $\Z_p$ with $c_0=0.$ 
Since $\Q_p$ is locally compact, there exists a Haar measure $\mu_p$ normalized by $\mu_{p}(p\Z_p)=1$. A detailed exposition on Haar measures can be found in \cite{folland}.

The Schneider map $T_{p}:p\Z_{p}\rightarrow p\Z_{p}$ is defined by $T_{p}(0)=0$, and for $x\neq 0$
  \begin{align*}
      T_{p}(x)=\dfrac{p^{a_{1}(x)}}{x}-b_{1}(x),
  \end{align*} where $a_{1}(x)=v_{p}(x)$ and $b_1(x)\in\{1,2,...,p-1\}$ is uniquely determined by the congruence $b_{1}(x)\equiv p^{a_{1}(x)}/x$ (mod $p$). Outside a countable set, every element of $p\Z_p$ admits a continued fraction expansion analogous to \eqref{continued}. More precisely, for $x\in p\Z_{p}\setminus \bigcup_{k\in \N} T_p^{-k}(0)$ and for each $n\in \N$, one can write
  \begin{align}\label{SchneiderContinued}
      x= \cfrac{p^{a_{1}(x)}}{b_{1}(x)+\cfrac{p^{a_{2}(x)}}{b_{2}(x)+\cfrac{p^{a_{3}(x)}}{\ddots + \cfrac{p^{a_{n}(x)}}{b_{n}(x)+T_{p}^n(x)}}}}.
  \end{align}
where $a_{i}(x)=a_{1}(T^{i-1}_{p}x)$ and $b_{i}(x)=b_{1}(T^{i-1}_{p}x)$.  Note that a point $x\in p\Zp$ has a finite continued fraction expansion if and only if $T^{n}_{p}(x)=0$ for some $n\in \N$. We denote by $F$ the set of points in $p\Z_p$ with finite continued fraction expansion. By the preceding discussion, $F=\bigcup_{k=0}^\infty T_p^{-k}(0)$. This situation is analogous to the classical Gauss map, where the points with finite continued fraction expansion are $\bigcup^{\infty}_{n=0}G^{-n}(0)=\Q$. In contrast, if $x$ is a rational number with a infinite Schneider continued fraction expansion, then there exists $N\in \N$ such that $a_{n}(x)=1$ and $b_{n}(x)=p-1$ for all $n\geq N$ (see  \cite[Theorem 1]{hw}). 
 The Schneider map is ergodic with respect to the Haar measure $\mu_p$  on $p\Z_p$ (see \cite[Lemma 2]{hw}) and its measure theoretic entropy is $\frac{p}{p-1}\log p $ (see \cite{hn}).

  From a fractal geometric perspective, Hu, Yu and Zhao (\cite{hyz}) studied multifractal decomposition determined by the asymptotic arithmetic mean of the digits appearing in the continued fraction expansion. More recently, Song, Wu, Yu and Zeng \cite{swyz} investigated the Hausdorff dimension of related level sets. We continue this line of research by studying the fractal decomposition induced by rational approximations associated with the Schneider map.

For each pair of integers $a\geq 1$ and $1\leq b\leq p-1$, we define the cylinder set 
  $$I_{(a,b)}:=\{x\in p\Z_p:a_{1}(x)=a \text{ and $b_{1}(x)=b$}\},$$
  and introduce the potential function
\begin{align*}
    \psi(x)=\dfrac{1}{\left|I_{(a_{1}(x),b_{1}(x))}\right|_{p}}=p^{a_{1}(x)}.
\end{align*} 
As will be shown, this potential plays a role analogous to $\log\|Df\|$ in the classical setting, in the sense that $\log \psi(x)$ measures the local expansion rate of $T_p$. 

For $x\in p\Z_p\setminus F$, we define the Lyapunov exponent by

\begin{align*}
     \lambda_{p}(x)=\lim_{n\rightarrow{\infty}}\dfrac{1}{n}S_{n}\log\psi (x),
\end{align*} 
whenever this limit exists, where $S_n\log\psi=\sum^{n-1}_{k=0}\log\psi\circ T^{k}_{p}$ denotes the $n$-th Birkhoff sum of $\log\psi$ with respect to $T_{p}$.  It follows that
\begin{align}
    \lambda_{p}(x)=\lim_{n\rightarrow{\infty}}\log p \cdot \dfrac{a_{1}(x)+a_{2}(x)+\cdots +a_{n}(x)}{n},
\end{align} 
so that, up to a multiplicative constant, the Lyapunov exponent coincides with the asymptotic arithmetic mean of the digits in the continued fraction expansion. The associated level sets are $$J_{p}(\alpha)=\{x\in p\Z_{p}\setminus F: \lambda_{p}(x)=\alpha\},$$ and the Lyapunov spectrum is defined by $$L_{p}(\alpha)=\mathrm{dim}_{\mathrm{H}}\,J_{p}(\alpha),$$ where $\mathrm{dim}_{\mathrm{H}}$ now denotes the Hausdorff dimension with respect to the $p$-adic norm. In the case of the Gauss map, the Lyapunov spectrum is computed via the topological pressure of the geometric potential $-t\log|G'|$, which allows one to estimate the diameters of cylinder sets using the Mean Value Theorem. However, since the Gauss map acts on a non-compact space, the thermodynamic formalism in this setting requires additional assumptions, such as the Big Images Property and H\"older continuity of the potential. Our main difficulty in developing a thermodynamic formalism for $T_p$ stems from two facts. First, the set of points in $p\Z_p$ with infinite continued fraction expansions is non-compact. Second, continuous functions on $p\Z_p$ do not, in general, satisfy a Mean Value Theorem. These considerations motivate our definition of the Lyapunov exponent $\lambda_p$, which directly captures the diameter of cylinder sets and links it to Hausdorff dimension.

Our main theorems extends to the $p$-adic setting the results of Weiss \cite{we} and Pollicott-Weiss \cite{mh} (see also \cite{io,kms,an}).

\begin{theorem}\label{PrincipalSinCalculoExplicito}
      The Lyapunov spectrum $L_{p}$ is real analytic on $[\log(p),\infty)$. For each $\alpha\geq  \log p$
    \begin{align}\label{EcuSInCalculoExplicito}
        L_{p}(\alpha)=\dfrac{1}{\alpha}\inf\left\{P(-t\log \psi)+t\alpha:t>0\right\},
    \end{align}where $P(-t\log\psi)$ is the topological pressure of $-t\log\psi$ with respect to $T_p$. The infimum is attained at a unique $t_{\alpha}>0$ such that $\frac{d}{dt}P(-t\log \psi)|_{t=t_{\alpha}}=-\alpha$. Moreover, there exists a unique equilibrium state $\mu_{t_{\alpha}}$ for $-t_{\alpha}\log \psi$ such that $\mu_{t_{\alpha}}(J_{p}(\alpha))=1$.
\end{theorem}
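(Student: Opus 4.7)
The plan is to reduce the problem to the thermodynamic formalism on the full shift $\Sigma = \mathcal{A}^{\mathbb{N}}$ with countable alphabet $\mathcal{A} = \mathbb{N}\times\{1,\ldots,p-1\}$, to which $T_p$ is topologically conjugate via the Schneider coding $x \mapsto ((a_i(x),b_i(x)))_{i\geq 1}$. The cornerstone is an explicit cylinder-diameter identity: since each inverse branch of $T_p$ is $p$-adically affine (so the local expansion is exactly $\psi(x) = p^{a_1(x)}$), a short induction gives
$$|T_p^n(y)-T_p^n(x)|_p = \psi(x)\,\psi(T_p x)\cdots\psi(T_p^{n-1}x)\,|y-x|_p$$
for every $y$ in the same level-$n$ cylinder $C_n(x)$, and combined with $T_p^n(C_n(x))=p\Zp$ yields $|C_n(x)|_p = p^{-(a_1(x)+\cdots+a_n(x))}$ in the normalized Haar measure. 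In particular $\lambda_p(x) = \lim_{n\to\infty}-\tfrac{1}{n}\log|C_n(x)|_p$, so the level sets $J_p(\alpha)$ become the sets where the local dimension of cylinders equals $\alpha$, amenable to a Billingsley/Frostman analysis.

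The second step is the closed-form evaluation of the pressure. Because $\log\psi$ is locally constant on level-one cylinders ($\log\psi\equiv a\log p$ on $I_{(a,b)}$), the Gurevich partition function is geometric and gives
$$P(-t\log\psi)=\log\sum_{a=1}^{\infty}\sum_{b=1}^{p-1}p^{-ta}=\log\frac{p-1}{p^t-1},\qquad t>0,$$
while $P(-t\log\psi)=+\infty$ for $t\leq 0$, which already forces the infimum in \eqref{EcuSInCalculoExplicito} to be taken over $t>0$. Differentiation yields $P'(t) = -\log p/(1-p^{-t})$, a real analytic strictly increasing bijection $(0,\infty)\to(-\infty,-\log p)$; hence for every $\alpha>\log p$ there is a unique $t_\alpha>0$ solving $P'(t_\alpha)=-\alpha$, explicitly $p^{-t_\alpha}=1-\log p/\alpha$. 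Strict convexity of $P$ and the implicit function theorem then show that $\alpha\mapsto t_\alpha$ and $\alpha\mapsto (P(-t_\alpha\log\psi)+t_\alpha\alpha)/\alpha$ are real analytic on $(\log p,\infty)$, with boundary value $L_p(\log p)=\log(p-1)/\log p$ recovered in the limit $t_\alpha\to\infty$.

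For the variational identification, take $\mu_{t_\alpha}$ to be the Bernoulli measure on $\Sigma$ with weights $\pi_{(a,b)} = \frac{p^{t_\alpha}-1}{p-1}\,p^{-t_\alpha a}$; it is ergodic and satisfies $P(-t_\alpha\log\psi) = h(\mu_{t_\alpha})-t_\alpha\int\log\psi\,d\mu_{t_\alpha}$ with $\int\log\psi\,d\mu_{t_\alpha}=-P'(t_\alpha)=\alpha$, hence is the unique equilibrium state, and $\mu_{t_\alpha}(J_p(\alpha))=1$ by Birkhoff's theorem. Shannon-McMillan-Breiman plus the cylinder formula from Step 1 yield the $\mu_{t_\alpha}$-a.e. pointwise dimension $h(\mu_{t_\alpha})/\alpha = (P(-t_\alpha\log\psi)+t_\alpha\alpha)/\alpha$, so the mass distribution principle gives the lower bound $L_p(\alpha)\geq (P(-t_\alpha\log\psi)+t_\alpha\alpha)/\alpha$. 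The matching upper bound is a standard covering argument: for any $t>0$, cover $J_p(\alpha)$ by level-$n$ cylinders on which $S_n\log\psi\approx n\alpha$ and use the partition function defining $P(-t\log\psi)$ to control the resulting $s$-dimensional Hausdorff pre-measure with $s=(P(-t\log\psi)+t\alpha)/\alpha$. The main technical obstacle is the non-compactness of $\Sigma$ together with the unboundedness of $\log\psi$, which block a direct appeal to Weiss's compact multifractal theorem; both are overcome by the fact that the equilibrium measures are Bernoulli products with exponentially decaying weights, so the Gibbs property, uniqueness, and the large-deviation estimates required for the covering upper bound all reduce to elementary computations with the explicit $\pi_{(a,b)}$.
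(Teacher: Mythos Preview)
Your proposal is correct and takes a genuinely different route from the paper's. The paper proves the lower bound by an approximation scheme: it first establishes the full multifractal formalism (Moran covers, Bowen formula, Pesin--Weiss theory) on the compact subsystems $p\Z_{p,n}$ where $a_i\le n$, obtains $L_{p,n}(\alpha)=\frac{1}{\alpha}\inf_{t}\{P_n(-t\log\psi)+t\alpha\}$ there, and then passes to the limit $n\to\infty$ using Sarig's convergence $P_n\to P$ together with Wijsman's theory of infimal convergence of Legendre transforms. You bypass all of this by exploiting the fact that $\log\psi$ is constant on level-one cylinders, so the equilibrium state for $-t\log\psi$ is the explicit Bernoulli product with weights $\pi_{(a,b)}=\tfrac{p^{t}-1}{p-1}p^{-ta}$; Birkhoff's theorem then gives $\mu_{t_\alpha}(J_p(\alpha))=1$ directly, and Shannon--McMillan--Breiman combined with the cylinder-diameter identity and the Billingsley/Frostman lemma yields the lower bound $L_p(\alpha)\ge h(\mu_{t_\alpha})/\alpha$ without any truncation. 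The upper bounds are essentially the same covering argument in both approaches.

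What each buys: your argument is shorter and self-contained, needing no general thermodynamic formalism for countable shifts beyond the elementary Bernoulli calculation, and it makes the uniqueness of the equilibrium state and the value $\int\log\psi\,d\mu_{t_\alpha}=\alpha$ completely explicit. The paper's approach, while heavier, sets up a framework that would continue to apply to potentials that are not locally constant on first-level cylinders (where the equilibrium state is no longer Bernoulli and the explicit computations are unavailable); in the present setting that generality is not used. One small point worth flagging in your write-up: at the endpoint $\alpha=\log p$ the infimum in \eqref{EcuSInCalculoExplicito} is not attained (your $t_\alpha\to\infty$), so the clause ``the infimum is attained at a unique $t_\alpha>0$'' in the theorem statement should be read on the open interval $(\log p,\infty)$; your limiting treatment of the boundary value $L_p(\log p)=\log(p-1)/\log p$ is the correct way to handle it.
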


\begin{theorem}\label{PrincipalTheorem}
 For each $\alpha\geq \log p,$ 
    \begin{align}\label{Ecu2PrincipalTheorem}
        L_p(\alpha)=\dfrac{\log(p-1)+\log(\alpha-\log p)-\log\log p+\alpha\log_{p} \alpha-\alpha\log_{p}(\alpha-\log p)}{\alpha}.
    \end{align}
\end{theorem}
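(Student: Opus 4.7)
The plan is to leverage Theorem \ref{PrincipalSinCalculoExplicito} and reduce the statement to a single explicit calculation: once the pressure $P(-t\log\psi)$ is known in closed form, the rest is calculus. By that theorem, for $\alpha > \log p$ one has
\[
L_p(\alpha) \;=\; \frac{1}{\alpha}\bigl(P(-t_\alpha \log\psi) + t_\alpha \alpha\bigr),
\]
where $t_\alpha$ is the unique positive root of $\tfrac{d}{dt}P(-t\log\psi) = -\alpha$. So the core task is to compute $P(-t\log\psi)$ explicitly.

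The key observation is that $\log\psi$ is constant on each cylinder $I_{(a,b)}$, taking the value $a\log p$, and that the Schneider map sends each such cylinder bijectively onto $p\Z_p$. This gives the system a countable full-shift Markov structure with alphabet $\{(a,b):a\geq 1,\,1\leq b\leq p-1\}$, and the pressure is computed term by term as
\[
P(-t\log\psi) \;=\; \log\sum_{a\geq 1}\sum_{b=1}^{p-1} p^{-ta}
\;=\; \log\Bigl((p-1)\sum_{a\geq 1} p^{-ta}\Bigr)
\;=\; \log(p-1) - \log(p^{t}-1),
\]
valid for all $t>0$. Differentiating gives
\[
\frac{d}{dt}P(-t\log\psi) \;=\; -\,\frac{p^{t}\log p}{p^{t}-1},
\]
and setting this equal to $-\alpha$ yields $p^{t_\alpha}=\alpha/(\alpha-\log p)$, i.e.\ $t_\alpha=\log_p\alpha-\log_p(\alpha-\log p)$, which is well defined and positive precisely because $\alpha>\log p$. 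Then $p^{t_\alpha}-1=\log p/(\alpha-\log p)$, so
\[
P(-t_\alpha\log\psi) \;=\; \log(p-1)+\log(\alpha-\log p)-\log\log p.
\]
Plugging into $L_p(\alpha)=\tfrac{1}{\alpha}\bigl(P(-t_\alpha\log\psi)+t_\alpha\alpha\bigr)$ and simplifying produces exactly \eqref{Ecu2PrincipalTheorem}.

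The endpoint $\alpha=\log p$ has to be handled separately since the first-order equation has no finite positive solution there. The idea is to use continuity: the right-hand side of \eqref{Ecu2PrincipalTheorem} extends continuously to $\alpha=\log p$ because the apparent singularity cancels via $\log(\alpha-\log p)-\alpha\log_p(\alpha-\log p)=(1-\alpha/\log p)\log(\alpha-\log p)\to 0$, giving the limiting value $L_p(\log p)=\log_p(p-1)$. This matches the direct count: $J_p(\log p)$ is carried by cylinders of depth $n$ with all $a_i=1$, of which there are $(p-1)^n$ each of diameter $p^{-n}$. The main technical point I would pay most attention to is justifying that the series expression for the pressure really coincides with the $P(-t\log\psi)$ used in Theorem \ref{PrincipalSinCalculoExplicito} (i.e.\ that the variational/Gurevich pressure agrees with the obvious series on this Markov system); once this identification is in place the remaining steps are routine manipulation.
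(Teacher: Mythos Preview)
Your proof is correct and follows essentially the same route as the paper: use Theorem~\ref{PrincipalSinCalculoExplicito}, compute $P(-t\log\psi)=\log\bigl((p-1)/(p^t-1)\bigr)$ (which the paper records as equation~\eqref{TopPressureSarig}), solve $\frac{d}{dt}P(-t\log\psi)=-\alpha$ for $t_\alpha$, and substitute. Your explicit handling of the endpoint $\alpha=\log p$ via continuity and the direct cylinder count is a nice addition that the paper's proof glosses over.
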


This explicit formula for the Lyapunov spectrum is remarkable. Such closed expressions are seldom available outside the context of affine iterated function systems on $\mathbb{R}^n$.

We conclude by highlighting some arithmetic consequences. 
Setting $\hat{\alpha}=\alpha/\log p$ we recover and refine previous results concerning digit means (see \cite[Theorem 2.2]{hyz}) : the set of points $x$ such that the digits $\{a_{1}(x),a_{2}(x),...\}$ have asymptotic mean $\hat{\alpha}$ has Hausdorff dimension 

   $$\dfrac{\widehat{\alpha}\log\widehat{\alpha}-(\widehat{\alpha}-1)\log(\widehat{\alpha}-1)+\log(p-1)}{\widehat{\alpha} \log p}.$$

We also establish a precise relationship between the Lyapunov exponent and the rate of convergence of the rational approximations arising from Schneider continued fraction, obtaining a $p$-adic analogue of classical Diophantine results for the Gauss map. This is
$$\lambda_p(x)=-\lim_{n\to \infty}\dfrac{1}{n}\log \left|x-\dfrac{p_n(x)}{q_n(x)}\right|_p,$$
where 
\begin{align*}
    \dfrac{p_{n}(x)}{q_{n}(x)}= \cfrac{p^{a_{1}(x)}}{b_{1}(x)+\cfrac{p^{a_{2}(x)}}{b_{2}(x)+\cfrac{p^{a_{3}(x)}}{b_{3}(x)+\cdots \dfrac{p^{a_{n}(x)}}{b_{n}(x)}}}}.
\end{align*}As a consequence, we obtain the following corollary.

\begin{corollary}\label{ApproximationSpectrumTHM}
      For each $\alpha\geq \log p$, the set 
      \begin{align*}
        \left\{x\in p\Z_{p}\setminus F:-\lim_{n\rightarrow \infty}\dfrac{1}{n}\log \left|x-\dfrac{p_{n}(x)}{q_{n}(x)}\right|_{p}=\alpha\right\}  
      \end{align*}
     has Hausdorff dimension
      \begin{align*}
          \dfrac{\log(p-1)+\log(\alpha-\log p)-\log\log p+\alpha\log_{p}\alpha-\log_{p}(\alpha-\log p)}{\alpha}.
      \end{align*}
  \end{corollary}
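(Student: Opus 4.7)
The plan is to reduce Corollary \ref{ApproximationSpectrumTHM} to Theorem \ref{PrincipalTheorem} by proving the pointwise identity
\[
\lambda_p(x) \;=\; -\lim_{n\to\infty} \frac{1}{n} \log\left|x - \frac{p_n(x)}{q_n(x)}\right|_p
\]
whenever either limit exists. Once this identity is in place, the level set appearing in the corollary coincides with $J_p(\alpha)$, and Theorem \ref{PrincipalTheorem} delivers the claimed Hausdorff dimension immediately.

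To prove the identity, I would first encode the Schneider expansion via Möbius transformations. Setting $M_i = \begin{pmatrix} 0 & p^{a_i(x)} \\ 1 & b_i(x) \end{pmatrix}$ and viewing $M_i$ as the fractional linear map $y \mapsto p^{a_i}/(b_i + y)$, one has $x = (M_1 \cdots M_n) \cdot T_p^n(x)$ and $p_n(x)/q_n(x) = (M_1 \cdots M_n) \cdot 0$. Writing $M_1 \cdots M_n = \begin{pmatrix} p_{n-1} & p_n \\ q_{n-1} & q_n \end{pmatrix}$, with the standard three-term recurrences $q_n = b_n q_{n-1} + p^{a_n} q_{n-2}$ (and analogously for $p_n$), and using $\det M_i = -p^{a_i}$, a direct Möbius computation gives
\[
x - \frac{p_n(x)}{q_n(x)} \;=\; \frac{(-1)^n\, p^{a_1(x) + \cdots + a_n(x)}\, T_p^n(x)}{q_n\,(q_{n-1}\, T_p^n(x) + q_n)}.
\]

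The next step exploits the ultrametric. Induction on the recurrence, using $b_n \in \{1,\ldots,p-1\}$ and $a_n \geq 1$, yields $q_n \equiv b_1 \cdots b_n \pmod{p}$, so $|q_n|_p = 1$ for every $n$. Since $T_p^n(x) \in p\Z_p$ and $|q_{n-1}|_p = 1$, we have $|q_{n-1} T_p^n(x)|_p \leq p^{-1} < |q_n|_p$, and the strong triangle inequality gives $|q_{n-1} T_p^n(x) + q_n|_p = 1$. Combined with $|T_p^n(x)|_p = p^{-a_{n+1}(x)}$, the formula collapses to
\[
\left|x - \frac{p_n(x)}{q_n(x)}\right|_p \;=\; p^{-(a_1(x) + \cdots + a_{n+1}(x))}.
\]
Taking $-\tfrac{1}{n}\log$ yields $\log p \cdot \frac{a_1(x) + \cdots + a_{n+1}(x)}{n}$, and a short Cesàro argument (observing that $a_{n+1}/n \to 0$ whenever the averages converge) shows this limit exists if and only if $\lambda_p(x)$ exists, in which case the two values coincide.

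The only nontrivial ingredient in the plan is the unit-norm identity $|q_n|_p = 1$; everything else is routine matrix algebra borrowed verbatim from the Archimedean theory of continued fractions, and in fact the ultrametric makes the final expression \emph{cleaner} than its real counterpart by removing the nontrivial denominator. With the two level sets identified, Theorem \ref{PrincipalTheorem} closes the proof.
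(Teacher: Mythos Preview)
Your proof is correct, and it takes a genuinely different route from the paper. The paper (Section~\ref{geompotential}) introduces two auxiliary potentials
\[
\psi_2(x)=p^{-a_1(x)}\Bigl|x-\tfrac{p^{a_1(x)}}{b_1(x)}\Bigr|_p^{-1},\qquad
\varphi(x)=p^{a_2(x)}\,\frac{\psi_2(T_p x)}{\psi_2(x)},
\]
proves via a telescoping identity (Lemma~\ref{Lemma1}) that $-\tfrac{1}{n}\log|x-p_n/q_n|_p$ is the Birkhoff average of $\log\varphi$, and then computes (Proposition~\ref{Proposition 52}) that $\log\varphi(x)=a_3(x)\log p$; the identification with $\lambda_p$ then goes through the $T_p$-invariance of $J_p(\alpha)$. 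Your approach bypasses all of this by writing the convergents as M\"obius products, using the determinant identity and the key observation $|q_n|_p=1$ (from $q_n\equiv b_1\cdots b_n\pmod p$) to obtain the exact closed formula $|x-p_n/q_n|_p=p^{-(a_1+\cdots+a_{n+1})}$, which reduces the corollary to Theorem~\ref{PrincipalTheorem} in one step. Your argument is shorter and more elementary, and in fact yields a sharper statement (an exact expression for the approximation error, not just the asymptotic rate); the paper's approach, on the other hand, stays within the thermodynamic-formalism idiom by casting the approximation rate as a Birkhoff average of a locally constant potential, which is consonant with the rest of the article.
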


The proof of our main theorems combine several key ingredients.
First, we employ a compact approximation scheme by analyzing truncated subsystems where points have bounded digits.
Second, we apply the thermodynamic formalism to establish the existence and uniqueness of equilibrium states for the relevant family of geometric potentials.
Finally, we use Gibbs measures and multifractal analysis to derive dimension formulas via pointwise dimensions and Legendre transforms, and then extend the results to the full system.

\subsection*{Overview of the article} In Section 2, we collect preliminaries from symbolic dynamics, the Schneider map, and thermodynamic formalism.
In section 3, we study compact approximations of $(T_{p},p\Zp)$ and derive the multifractal spectra for these subsystems. Section 4 is devoted to the thermodynamic formalism of the Schneider map, where we establish the main results of the article. In Section 5, we relate the geometric potential to rational approximations and prove Corollary \ref{ApproximationSpectrumTHM}. Finally, in order to keep the presentation clear and focused, the technical lemmas are deferred to section 6.

\section{Preliminaries}
We briefly recall the definition of the Hausdorff dimension, for further details, see \cite[Section 2.2]{fa}. Let $Q$ be a subset of  $p\Zp$ and Let $s\geq 0$ and $\delta>0$. We define 
\begin{align*}
    \mathcal{H}^{s}_{\delta}(Q):=\inf\left\{\sum_{i}|U_{i}|_{p}^{s}:\begin{array}{c}
         \text{ $\{U_{i}\}$ is a countable cover of $Q$ such that for every $i$}  \\
         \text{ $|U_{i}|_{p}<\delta$.}
    \end{array} \right\}, 
\end{align*} where $|U_{i}|_{p}$ denotes the diameter of $U_{i}$ with respect to the $p$-adic metric. We then define $\mathcal{H}^{s}(Q)=\lim_{\delta\rightarrow 0}\mathcal{H}^{s}_{\delta}(Q)$ and the Hausdorff dimension of $Q$ by
\begin{align}\label{HAUSDORFF}
   \mathrm{dim}_{\mathrm{H}}(Q):= \inf\{s:\mathcal{H}^{s}(Q)=0\}.
\end{align} Now we define the symbolic framework of associated with the map $T_{p}$. Let $E=\N \times \{1,...,p-1\}$ and $\Sigma=E^\N$. We consider  $(\Sigma,\sigma)$ the full-shift on $\Sigma$, where $\sigma\left((a_{i},b_{i})_{i=1}^{\infty}\right)=(a_{i+1},b_{i+1})_{i=1}^{\infty}$. This is the standard full-shift on countable symbols. The topology on $\Sigma$ is generated by the cylinder sets of the form $[(a,b)]=\{(a_{i},b_{i})_{i\in \N}:a_{1}=a \text{ and $b_{1}=b$}\}$, where $(a,b)\in E$. For a finite sequence $\{(a_0,b_0),...,(a_1,b_1)\}$, we define the cylinder
\begin{align*}
    [(a_{1},b_{1}),...,(a_{n},b_{n})]=\bigcap^{n-1}_{k=0}\sigma^{-k}[(a_{k+1},b_{k+1})].
\end{align*} For each $(a,b)\in E$, let $I_{(a,b)}$ denote the set of points $x\in p\Z_{p}$ such that $a_{1}(x)=a$ and $b_{1}(x)=b$. Similarly,
\begin{align*}
    I_{(a_{i},b_{i})^{n}_{i=1}}=\bigcap_{k=0}^{n-1}T_{p}^{-k}\left(I_{(a_{k+1},b_{k+1})}\right).
\end{align*}

Observe that for any $x,y\in I_{(a,b)}$, we have 
\begin{align}\label{Equation1}
    \left|T_{p}(x)-T_{p}(y)\right|_{p}=\left|p^{a}\left(\dfrac{1}{x}-\dfrac{1}{y}\right)\right|_{p}=p^{-a}\dfrac{|y-x|_{p}}{|xy|_{p}}.
\end{align}
Since $|x|_{p}=|y|_{p}=p^{-a}$, it follows that $\left|T_{p}(x)-T_{p}(y)\right|_{p}=p^{a}|x-y|_{p}$. Therefore, the Schneider map is locally expanding, with expansion factor $p^{a}$ on each cylinder $I_{(a,b)}$. Consequently, the countable partition $\{I_{(a,b)}\}_{(a,b)\in E}$ generates the topology on $p\Z_p\setminus F$. Define the map $\pi:\Sigma\to p\Z_p\setminus F$ given by
\begin{align*}
    (a_{i},b_{i})_{i\in \N}\mapsto \bigcap_{i=0}^{\infty}T_{p}^{-i}\left(I_{(a_{i+1},b_{i+1})}\right).
\end{align*}
Since {$\pi\left([a,b]\right)=I_{(a,b)}$} and each $I_{(a,b)}$ is both open and closed in the $p$-adic topology, $\pi$ defines a homeomorphism between $\Sigma$ and $p\Zp\setminus F$. In particular, $(\Sigma,\sigma)$ and $(p\Zp\setminus F,T_{p})$ are topologically conjugated (\cite{walters2000introduction}). 
Henceforth, for real-valued functions $f\colon p\Z_p\to \R$, and $g\colon \Sigma\to \R$, we denote by $S_n f(x)$ and $S_n g(x)$ the Birkhoff sums $\sum_{k=0}^{n-1} f\circ T_p^k(x)$ and $\sum_{k=0}^{n-1} g\circ \sigma^k(x)$ respectively.

A function $\rho:p\Z_{p}\setminus F\rightarrow \R$ is said to be \textit{H\"older} continuous with exponent $0<\alpha \leq 1$ if there exists a constant $C>0$ such that for all $x,y\in p\Zp$,
\begin{align*}
    |\rho(x)-\rho(y)|\leq C|x-y|^\alpha_{p}.
\end{align*}
For any $n\geq 1$, we define the variation of $\rho$ on cylinders of length $n$ by
\begin{align*}
 \mathrm{Var}_{n}(\rho):=\sup\{|\rho(x)-\rho(y)|:a_{i}(x)=a_{i}(y) \text{ and $b_{i}(x)=b_{i}(y)$ for $0\leq i\leq n-1$}\}.
\end{align*} 
If $\rho:p\Zp \setminus F\to \mathbb{R}$ is H\"older continuous, 
the \emph{topological pressure of $\rho$} is defined by  

\begin{align}\label{DefPressure}
    P(\rho) := \sup \left\{ h_{\mu} + \int \rho \, d\mu : \mu \in \mathcal{M}(p\Zp\setminus F,T_{p}), -\int \rho \, d\mu < \infty \right\},
\end{align}where \( h_{\mu} \) denotes the measure-theoretic entropy of \( \mu \), and \( \mathcal{M}(p\Zp\setminus F,T_{p}) \) denotes the set of \( T_{p} \)-invariant probability measures. Any measure achieving the supremum is called an \textit{equilibrium state} for \( \rho\). If $\sum_{k\geq 1}\mathrm{Var}_{k}(\rho)<\infty$, then $P(\rho)$ can be computed using periodic points (\cite[Cor. 1]{bs})
\begin{align}\label{EcuPressure}
    P(\rho)=\lim_{n\rightarrow \infty}\dfrac{1}{n}\log\left(\sum_{T_{p}^{n}x=x}e^{-S_{n} \rho(x)}\right).
\end{align}

\section{Restricting the Schneider map to bounded valuations}

\noindent Let us restrict the Schneider map to the set of points $x\in p\Zp$ for which the digits $a_i(x)$ are bounded by a fixed integer $n\geq 1$. 
We denote this subset by $p\Z_{p,n}$, and write $T_{p,n}$ for the restriction of $T_{p}$ to $p\Z_{p,n}$. Let $E_{n}=\{1,...,n\}\times \{1,...,p-1\}$ and $\Sigma_{n}=E_{n}^{\mathbb{N}}$. Analogously to $(T_{p},p\Zp)$, the subsystem $(T_{p,n},p\Z_{p,n})$ is topologically conjugated to the full-shift $(\Sigma_{n},\sigma)$ via the map
\begin{align*}
    \pi_{n}:\Sigma_{n}&\longrightarrow p\Z_{p,n}\setminus F\\
    (a_{i},b_{i})_{i\in \N}&\longmapsto \bigcap_{i=0}^{\infty}T_{p,n}^{-i}\left(I_{(a_{i+1},b_{i+1})}\right).
\end{align*}
For simplicity, we denote by $P_{n}$ the topological pressure, whether computed on $\Sigma_{n}$ or on $p\Z_{p,n}\setminus F$. For H\"older continuous potentials, $P_{n}$ can be computed using periodic points as in \eqref{EcuPressure}. Our proof of Theorem \ref{PrincipalSinCalculoExplicito} follows the spirit of \cite[Theorem 4.1]{io} and \cite[Theorem 5.1]{an}. To that end, we first establish an analogous result for the Lyapunov spectrum of $T_{p,n}$. In fact, we obtain a complete multifractal description of these dynamical systems, summarized in Theorem \ref{PrincipalPesinWeiss} below. Let $\nu$ be any $T_{p,n}$-invariant measure. For $x\in p\Z_{p,n}$, the \textit{upper} and \textit{lower pointwise dimensions of $\nu$} at $x$ are defined by
    \begin{align*}
        \overline{d}_{\nu}(x)=\limsup_{r\rightarrow 0}\dfrac{\log \nu(B(x,r))}{\log r}, \text{ and }\underline{d}_{\nu}(x)=\liminf_{r\rightarrow 0}\dfrac{\log \nu(B(x,r))}{\log r}.
    \end{align*}If these two limits coincide, their common value is denoted by $d_{\nu}(x)$ and is called the \textit{pointwise dimension of $\nu$ at $x$}. It quantifies how the measure $\nu$ distributes upon different scales (see \cite[Chapter 17]{fa} and \cite{yh2}). The measure $\nu$ is said to be exact dimensional if $d_{\nu}(x)$ exists and is constant $\nu$-almost everywhere on $p\Z_{p,n}$. Exact dimensional measures describe the fine-scale geometry (see \cite{yh2}) through the fractal decomposition given by the level sets 
\begin{align*}
    K_{\alpha}=\{x\in p\Z_{p,n}: d_{\nu}(x)=\alpha\}.
\end{align*} The associated dimension spectrum is defined by
\begin{align}
    f_{\nu}(\alpha)=\dim_{\mathrm{H}}(K_{\alpha}).
\end{align} Let $K_{\infty}$ denote the set of points where the pointwise dimension does not exist. Then
\begin{align*}
    p\Z_{p,n}=K_{\infty}\cup \bigcup_{\alpha \in \R}K_{\alpha}.
\end{align*}

Given a H\"older continuous function $\zeta$ on $(\Sigma_{n},\sigma)$. A $\sigma$-invariant probability measure $\mu$ on $\Sigma_{n}$ is called a \textit{Gibbs measure} for $\zeta$ if there exists a  constant $C\geq 1$ such that, for all $m\in \N$, $w\in E_{n}^{m}$, and all $(x_{i})_{i\in \N}\in [w]$
    \begin{align*}
        C^{-1}\leq \dfrac{\mu\left([w]\right)}{\exp\left(S_{m}\zeta(x_{i})-mP_{n}(\zeta)\right)}\leq C.
    \end{align*} Let $\mu_{\zeta}$ be the unique equilibrium state for $\zeta$ (see \cite{pu} for the existence and uniqueness of $\mu_{\zeta}$). 

Now let $\rho:p\Zp \setminus F\to \mathbb{R}$ be H\"older continuous, and define $\zeta= \rho \circ \pi_{n}$. Set $\nu_{\rho}=  (\pi_{n})_{*}\mu_{\zeta}$ the pushforward measure of $\mu_{\zeta}$. Then $\zeta$ is H\"older continuous on $\Sigma_{n}$, $\mu_{\zeta}$ is a Gibbs measure for $\zeta$, and consequently $\nu_{\rho}$ is an equilibrium state for $\rho$. Define the H\"older continuous function $\vartheta$ on $\Sigma_{n}$ by $\log\vartheta=\zeta-P_{n}(\zeta)$. By construction, $P_{n}(\log\vartheta)=0$, and $\mu_{\zeta}$ is also the equilibrium state for $\log\vartheta$. Moreover, the map $q\mapsto P_{n}(q\log\vartheta)$ is differentiable and strictly convex for all $q\in \R$ \cite[Section 5.6]{pu}. It is a classical result in thermodynamic formalism that for each $q\in \R$ there exists a unique $\mathcal{T}(q)\in \R$ such that the potential
\begin{align*}
    \phi_{q,n}=\mathcal{T}(q)\log(\psi\circ\pi_{n})+q\log\vartheta ,
\end{align*} satisfies $P_{n}(\phi_{q,n})=0$. Each potential in the one-parameter family $\{\phi_{q,n}\}_{q\in \R}$ is H\"older continuous and admits a unique Gibbs measure $\mu_{q}$. 

Theorem \ref{PrincipalPesinWeiss} shows that the potential function $\log\psi$ plays the same role as $\log \|Df\|$ in the theory of conformal expanding maps on compact manifolds (\cite[Theorem 1]{yh}).Although the proof follows the same general strategy, several technical adaptations are required. First, $\log\psi(x)=a_{1}(x)\log p$ controls the diameter of cylinder sets, and for all $x\in p\Z_{p,n}\setminus F$ the product $\prod_{k=0}^{n}p^{-a_{1+k}(x)}$ decays exponentially. Second, the following Bowen-type formula provides the Haussdorf dimension of the attractor. See the last section for a complete proof of Lemma \ref{BowenFormula} and Theorem \ref{PrincipalPesinWeiss}.
\begin{lemma}\label{BowenFormula}
    For each $n\in \N$ we have $\mathcal{T}(0)=\dim_{\mathrm{H}}\left(p\Z_{p,n}\setminus F\right)$, that is,
    \begin{align*}
        P_{n}
        \left(-\dim_{\mathrm{H}}\left(p\Z_{p,n}\setminus F\right)\log \psi\right)=0.
    \end{align*} Moreover, $\dim_{\mathrm{H}}\left(p\Z_ {p,n}\setminus F\right)$ is the unique positive real number $\tilde{s}$ satisfying $$\sum^{n}_{k=1}\dfrac{1}{p^{k\tilde{s}}}=\dfrac{1}{p-1}.$$
\end{lemma}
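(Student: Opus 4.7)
The plan is to prove both conclusions of the lemma—the Bowen equation $P_n(-\tilde s\log\psi)=0$ and the explicit characterisation $\sum_{k=1}^n p^{-k\tilde s}=1/(p-1)$—by computing the pressure in closed form and then bracketing $\dim_{\mathrm{H}}(p\Z_{p,n}\setminus F)$ from above and below by the common value $\tilde s$. The key simplification is that $\log\psi(x)=a_1(x)\log p$ is locally constant on level-$1$ cylinders, so $\mathrm{Var}_k(\log\psi)=0$ for every $k\geq 1$ and the periodic-orbit formula \eqref{EcuPressure} applies. For any $T_{p,n}$-periodic word $w=(a_i,b_i)_{i=1}^m\in E_n^m$ one has $S_m\log\psi(x)=(\log p)(a_1+\cdots+a_m)$, the partition function factorises, and I obtain
$$P_n(-s\log\psi)=\log\!\Bigl((p-1)\sum_{a=1}^n p^{-sa}\Bigr).$$
This vanishes exactly when $\sum_{a=1}^n p^{-sa}=1/(p-1)$; strict monotonicity together with the limits at the endpoints of $(0,\infty)$ produces a unique positive root $\tilde s$.

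For the upper bound $\dim_{\mathrm{H}}(p\Z_{p,n}\setminus F)\leq\tilde s$, iterating \eqref{Equation1} gives $|I_w|_p=p^{-(a_1+\cdots+a_m+1)}$ for each level-$m$ cylinder, so
$$\sum_{w\in E_n^m}|I_w|_p^{\tilde s}=p^{-\tilde s}\Bigl((p-1)\sum_{a=1}^n p^{-\tilde s a}\Bigr)^{\!m}=p^{-\tilde s},$$
uniformly in $m$, while $|I_w|_p\leq p^{-(m+1)}\to 0$. This produces a family of covers of $p\Z_{p,n}\setminus F$ with bounded $\tilde s$-mass, giving $\mathcal{H}^{\tilde s}(p\Z_{p,n}\setminus F)\leq p^{-\tilde s}$.

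For the lower bound I would introduce the Bernoulli product measure $\nu$ on $\Sigma_n$ assigning mass $p^{-\tilde s a}$ to each symbol $(a,b)\in E_n$—a probability because $(p-1)\sum_a p^{-\tilde s a}=1$—and push it forward to $p\Z_{p,n}\setminus F$ via $\pi_n$. On each level-$m$ cylinder this gives $\nu(I_w)=p^{-\tilde s(a_1+\cdots+a_m)}=p^{\tilde s}|I_w|_p^{\tilde s}$. For an arbitrary ball $B(x,r)$ with $r=p^{-k}$ and $x\in p\Z_{p,n}\setminus F$, take $m$ maximal with $a_1(x)+\cdots+a_m(x)+1\leq k$; the ultrametric places $B(x,r)$ inside $I_{w(x,m)}$, and shift-invariance of $\nu$ reduces $\nu(B(x,r))$ to $\nu(I_{w(x,m)})$ times the $\nu$-mass of a ball in $p\Z_{p,n}$ of radius $p^{-(k-a_1(x)-\cdots-a_m(x))}$. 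The defining inequalities for $m$ force $k-(a_1(x)+\cdots+a_m(x))\leq a_{m+1}(x)\leq n$, yielding $\nu(B(x,r))\leq p^{\tilde s n}\,r^{\tilde s}$. The mass distribution principle then gives $\dim_{\mathrm{H}}(\supp\nu)\geq\tilde s$, and since $F$ is countable (hence $0$-dimensional and $\nu$-null) we deduce $\dim_{\mathrm{H}}(p\Z_{p,n}\setminus F)\geq\tilde s$.

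The main technical obstacle is this last scale-matching step: cylinders have non-uniform $p$-adic diameters and not every ball of $p\Z_p$ coincides with a cylinder, but the truncation $a_i(x)\leq n$ bounds the worst-case mismatch between a ball and its enclosing cylinder by the factor $p^n$, which is exactly what allows the Frostman-type estimate. Combining the two bounds gives $\dim_{\mathrm{H}}(p\Z_{p,n}\setminus F)=\tilde s$, which by the pressure calculation satisfies $P_n(-\tilde s\log\psi)=0$ and therefore equals $\mathcal{T}(0)$, completing the proof.
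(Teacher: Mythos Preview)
Your argument is correct. The pressure computation, the existence and uniqueness of $\tilde s$, and the upper bound via the natural cylinder covers are carried out essentially as in the paper (your diameter $|I_w|_p=p^{-(a_1+\cdots+a_m+1)}$ is in fact the accurate value; the extra factor $p^{-\tilde s}$ it produces is harmless).

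The genuine difference lies in the lower bound. The paper argues by direct counting: given an arbitrary finite cover $\{\mathcal U_i\}$, it picks integers $k_i$ with $p^{-(k_i+1)}\le|\mathcal U_i|_p\le p^{-k_i}$, then for a large depth $j$ bounds the number of elements of the cylinder partition $B_j$ that each $\mathcal U_i$ can meet by $(n(p-1))^{j-k_i}$, and combines this with the defining relation for $\tilde s$ to show $\sum_i|\mathcal U_i|_p^{\tilde s}\ge p^{-\tilde s}$. You instead build the Bernoulli (Gibbs) measure $\nu$ with weights $p^{-\tilde s a}$ and apply the mass distribution principle, using the ultrametric fact that Schneider cylinders are metric balls together with the digit bound $a_i\le n$ to get $\nu(B(x,r))\le p^{\tilde s n}r^{\tilde s}$. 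Your route is a bit more conceptual and has the pleasant side effect of exhibiting explicitly the equilibrium state for $-\tilde s\log\psi$ (the very measure $\mathfrak m$ that reappears in the proof of Theorem~\ref{PrincipalPesinWeiss}); the paper's route is more elementary in that it avoids constructing any measure. One small remark: in your Frostman step the inequality $\nu(B(x,r))\le\nu(I_{w(x,m)})$ already suffices, so the intermediate appeal to shift-invariance can be dropped.
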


\begin{theorem}\label{PrincipalPesinWeiss} 
For each $n\in \N$, the following holds:
\begin{enumerate}
    \item The measure $\nu_{\rho}$ is exact dimensional. Moreover, for $\nu_{\rho}$-almost every $x\in p\Z_{p,n}\setminus F$
        $$
             d_{\nu_{\rho}}(x)=\displaystyle \cfrac{\int_{\Sigma_{n}}\log\vartheta \ d\mu_{\zeta}}{-\int_{\Sigma_{n}}\log(\psi\circ \pi_{n}) \ d\mu_{\zeta}}.
        $$
    \item The function $q\mapsto \mathcal{T}(q)$ is real analytic, $\mathcal{T}(1)=0$ and $\mathcal{T}''(0)\geq 0$.
    \item The function $\alpha(q)
    :=-\mathcal{T}'(q)$ takes values in $[\log p,n\log p]$ and $\nu_{\rho}$ is fully supported in $p\Z_{p,n}\setminus F$. Furthermore, 
        \begin{align*}
            f_{\nu_{\rho}}(\alpha(q))=\mathcal{T}(q)+q\alpha(q).
        \end{align*}
    \item  For any $q\in \R$ 
     \begin{align*}
            \mathcal{T}(q)=-\lim_{r\to 0}\dfrac{\log\left(\inf_{\mathcal{V}}\sum_{B\in \mathcal{V}}\nu_{\rho}(B)^{q}\right)}{\log r},
        \end{align*}where the infimum is taken over all finite covers $\mathcal{V}$ of $p\Z_{p,n}\setminus F$ by open balls $B$ of radius $r$.
\end{enumerate}
\end{theorem}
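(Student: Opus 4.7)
The plan is to adapt the Pesin--Weiss multifractal formalism \cite{yh} to the non-Archimedean setting. The key geometric observation is that, by the expansion identity \eqref{Equation1} and the ultrametric structure, every Schneider cylinder $I_{(a_1,b_1),\dots,(a_k,b_k)}$ is itself a $p$-adic ball of diameter proportional to $\exp(-S_k\log\psi(x))$ for any $x$ inside; moreover, because the truncation forces $a_i\leq n$, consecutive diameters in the nested sequence of cylinders at a point $x$ differ by a factor at least $p^{-n}$, so the Schneider cylinders are cofinal in the family of metric balls around $x$ with controlled ratio. This bypasses the absence of a Mean Value Theorem raised in the introduction and permits a clean transfer of Gibbs and Birkhoff estimates into pointwise-dimension statements.

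For part (1), I combine the Gibbs bound
\[
    C^{-1}\exp(S_k\log\vartheta(y))\;\leq\; \mu_\zeta([w])\;\leq\; C\exp(S_k\log\vartheta(y)),\qquad y\in[w],
\]
with the Birkhoff ergodic theorem applied to the integrable potentials $\log\vartheta$ and $\log(\psi\circ\pi_n)$ over the ergodic system $(\Sigma_n,\sigma,\mu_\zeta)$. Pushing forward via $\pi_n$ and using the cylinder-to-ball comparison of the previous paragraph, the ratio $\log\nu_\rho(B(x,r))/\log r$ converges for $\nu_\rho$-a.e.\ $x$ to the ratio of these two Birkhoff averages, which are $\mu_\zeta$-almost surely constant. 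This gives exact dimensionality together with the announced value of $d_{\nu_\rho}(x)$.

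For (2) and (3), real analyticity of $\mathcal{T}$ follows from the implicit function theorem applied to $\Phi(q,t):=P_n(t\log(\psi\circ\pi_n)+q\log\vartheta)$. The pressure is real analytic on H\"older potentials over a finite-alphabet full shift \cite{pu}, and $\partial_t \Phi=\int\log(\psi\circ\pi_n)\,d\mu_{q,t}$ is bounded away from $0$ because $\log\psi\geq \log p$, so $\Phi(q,\mathcal{T}(q))=0$ defines an analytic function. Evaluating at $q=1$ gives $\mathcal{T}(1)=0$ because $P_n(\log\vartheta)=0$ by construction, and $\mathcal{T}''(0)\geq 0$ follows from convexity of the pressure. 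Differentiating $\Phi(q,\mathcal{T}(q))=0$ expresses $\alpha(q)=-\mathcal{T}'(q)$ as a ratio of integrals of $\log\vartheta$ and $\log(\psi\circ\pi_n)$ against the Gibbs measure $\mu_q$ of $\phi_{q,n}$, forcing $\alpha(q)\in[\log p,n\log p]$. Full support of $\nu_\rho$ is immediate from the Gibbs property (every cylinder has positive $\mu_\zeta$-mass) together with the continuity of $\pi_n$. The dimension identity $f_{\nu_\rho}(\alpha(q))=\mathcal{T}(q)+q\alpha(q)$ is obtained by applying part~(1) to $\mu_q$ in place of $\mu_\zeta$: the pointwise dimension of $\mu_q$ equals $\mathcal{T}(q)+q\alpha(q)$, and $\mu_q$-typical points lie in $K_{\alpha(q)}$ because they realise the Birkhoff average $\alpha(q)$, yielding the lower bound via the mass distribution principle; the matching upper bound comes from covering $K_{\alpha(q)}$ by level-$k$ cylinders of diameter $\asymp e^{-k\alpha(q)}$ and applying the Gibbs bound for $\mu_q$ together with $P_n(\phi_{q,n})=0$.

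Part (4) is then a variational reformulation. Choosing $k$ with $e^{-k\log p}\asymp r$ and covering $p\Z_{p,n}\setminus F$ by level-$k$ cylinders, the Gibbs bound for $\nu_\rho$ yields
\[
    \sum_{B\in\mathcal{V}} \nu_\rho(B)^q\;\asymp\;r^{\mathcal{T}(q)}\sum_w\exp\!\big(S_k\big(q\log\vartheta-\mathcal{T}(q)\log(\psi\circ\pi_n)\big)\big),
\]
and the right-hand sum is $\asymp \exp(kP_n(\phi_{q,n}))=1$ by \eqref{EcuPressure}; taking logarithms and letting $r\to 0$ delivers the claimed formula. The principal obstacle throughout is the cylinder-to-ball comparison with a constant uniform in $x$. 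This is precisely where the truncation $a_i\leq n$ enters essentially, and also explains why the analysis must first be carried out for the compact subsystems $(T_{p,n},p\Z_{p,n})$ and only later extended by approximation to the full non-compact system $(T_p,p\Zp)$ in Section~4.
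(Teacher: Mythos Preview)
Your strategy is the same Pesin--Weiss template the paper follows, and parts (1)--(3) are handled correctly: Gibbs bounds plus Birkhoff for exact dimensionality, the implicit function theorem for analyticity of $\mathcal{T}$, and the mass-distribution principle for the lower bound in $f_{\nu_\rho}(\alpha(q))=\mathcal{T}(q)+q\alpha(q)$. Your identification of the bounded-ratio property of consecutive cylinder diameters (factor at most $p^n$) as the key geometric input replacing the Mean Value Theorem is exactly right, and is what the paper exploits as well.

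Part (4), however, has a real gap. You cover by \emph{fixed-depth} level-$k$ cylinders with $p^{-k}\asymp r$, but such cylinders have diameters ranging over $[p^{-nk},p^{-k}]$, not all $\asymp r$; your displayed factorisation $\sum_B\nu_\rho(B)^q\asymp r^{\mathcal{T}(q)}\sum_w\exp(S_k\phi_{q,n})$ silently assumes $|I_w|=r$ for every $w$, which fails at fixed depth. The remedy is a \emph{Moran cover}: at each $x$ choose the depth $m(x)$ so that the cylinder diameter first drops below $r$ (your bounded-ratio observation then pins this diameter within $p^{-n}$ of $r$, and forces any ball $B(x,r)$ to meet at most $n(p-1)$ Moran pieces). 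Even then the depths vary, so the sum does not reduce to a single partition function; the paper circumvents this by bringing in the auxiliary Gibbs measure $\nu_q$ for $\phi_{q,n}$, using the three-measure comparison $\nu_q(R_r^j)\asymp\mathfrak{m}(R_r^j)^{\mathcal{T}(q)/\tilde s}\,\nu_\rho(R_r^j)^q$ together with $\mathfrak{m}(R_r^j)\asymp r^{\tilde s}$ and $\sum_j\nu_q(R_r^j)=1$. Finally, you still need to pass from this one cover to the \emph{infimum} over all covers by $r$-balls, which the paper does via the bounded-multiplicity of the Moran cover and diametric regularity of $\nu_\rho$; your sketch does not address this step.
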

 
We now restate a result of Weiss connecting this formalism with the Lyapunov exponent, which in our setting follows from the same proof (see \cite[Theorem 2.3]{we}).
\begin{theorem}\label{Theorem 2.3 Weiss}
    For each $n\in \N$ and for $\nu_{\rho}$-almost every $x\in p\Z_{p,n}\setminus F$ we have
    \begin{align*}
        d_{\nu_{\rho}}(x)=\dfrac{P_{n}(\rho)-\overline{\rho}(x)}{\lambda_{p}(x)}=\dfrac{h_{\nu_{\rho}}+\int \rho d\nu_{\rho}-\overline{\rho}(x)}{\lambda_{p}(x)},
    \end{align*}where $\displaystyle \overline{\rho}(x)=\lim_{m\to \infty}\frac{1}{m}\sum^{m-1}_{k=0}\rho\left(T_{p,n}^{k}(x)\right)$.
\end{theorem}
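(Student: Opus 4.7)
The plan is to read off the pointwise dimension $d_{\nu_\rho}(x)$ directly from the Gibbs characterization of $\mu_\zeta$ (hence of $\nu_\rho=(\pi_n)_{*}\mu_\zeta$), once we identify balls around $x$ with cylinders. Writing $I_m(x):=I_{(a_1(x),b_1(x),\ldots,a_m(x),b_m(x))}$, an induction from \eqref{Equation1} together with $|I_{(a,b)}|_p=p^{-a}$ yields
\begin{align*}
 \log|I_m(x)|_p = -S_m\log\psi(x).
\end{align*}
Because $p\Z_p$ is ultrametric and each $I_m(x)$ is a ball, the family $\{I_m(x)\}_{m\geq 1}$ is the full decreasing sequence of balls around $x$, so for every small enough $r>0$ there is a unique $m=m(x,r)\to\infty$ with $B(x,r)=I_m(x)$ and $r\in[|I_m(x)|_p,|I_{m-1}(x)|_p)$.

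Next, the Gibbs property of $\mu_\zeta$ for $\zeta=\rho\circ\pi_n$, pushed forward through the conjugacy $\pi_n$, gives a constant $C\geq 1$ with
\begin{align*}
 C^{-1}e^{S_m\rho(x)-mP_n(\rho)}\leq \nu_\rho(I_m(x))\leq C\,e^{S_m\rho(x)-mP_n(\rho)}
\end{align*}
for all $x$ and $m$. Combining with the diameter identity,
\begin{align*}
 \frac{\log\nu_\rho(I_m(x))}{\log|I_m(x)|_p}
 =\frac{\tfrac{1}{m}S_m\rho(x)-P_n(\rho)+O(1/m)}{-\tfrac{1}{m}S_m\log\psi(x)}.
\end{align*}
Birkhoff's ergodic theorem applied to the H\"older continuous potentials $\rho$ and $\log\psi$ with respect to $\nu_\rho$ (which is ergodic, being the unique equilibrium state of a H\"older potential on the full shift $\Sigma_n$) gives, on a set of full $\nu_\rho$-measure, $\tfrac{1}{m}S_m\rho(x)\to\overline\rho(x)$ and $\tfrac{1}{m}S_m\log\psi(x)\to\lambda_p(x)\geq\log p>0$. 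Hence the quotient tends to $(P_n(\rho)-\overline\rho(x))/\lambda_p(x)$.

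To promote this subsequential limit to the full limit as $r\to 0$, I would use a sandwich: on each interval $r\in[|I_m(x)|_p,|I_{m-1}(x)|_p)$ the numerator $\log\nu_\rho(B(x,r))$ is constant and equal to $\log\nu_\rho(I_m(x))$, and $\log r$ lies between $-S_m\log\psi(x)$ and $-S_{m-1}\log\psi(x)$. Both endpoints, divided by $m$, tend to $-\lambda_p(x)$ (their difference is $a_m(x)\log p/m\leq n\log p/m\to 0$), so $\log\nu_\rho(B(x,r))/\log r$ shares the limit of its value at $r=|I_m(x)|_p$. This yields $d_{\nu_\rho}(x)=(P_n(\rho)-\overline\rho(x))/\lambda_p(x)$, and the second equality is just the variational identity $P_n(\rho)=h_{\nu_\rho}+\int\rho\,d\nu_\rho$ for the equilibrium state $\nu_\rho$. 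The only real obstacle is the sandwich step, which is painless precisely because the restriction to $p\Z_{p,n}$ enforces the uniform digit bound $a_m(x)\leq n$; without it one would have to argue that the ratio $a_m(x)/m$ still goes to $0$ almost surely, but in the bounded-digit setting this is immediate.
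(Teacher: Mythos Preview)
Your strategy is correct and more direct than what the paper does here (the paper does not prove this statement itself but refers to Weiss, and the machinery it builds for the surrounding multifractal theorem passes through Moran covers). There is, however, one false step: the cylinders $\{I_m(x)\}_{m\geq 1}$ are \emph{not} the full decreasing sequence of balls around $x$. Whenever $a_m(x)>1$, the balls $x+p^{k}\Z_p$ with $k$ strictly between the exponents giving $|I_{m-1}(x)|_p$ and $|I_m(x)|_p$ sit strictly between $I_m(x)$ and $I_{m-1}(x)$; pushed forward by $T_{p}^{\,m-1}$ such a ball becomes $p^{j}\Z_p$ for some $1\leq j\leq a_m(x)$, which is a union of several first-level cylinders and therefore carries strictly more $\nu_\rho$-mass than $I_m(x)$. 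Your assertion that $\nu_\rho(B(x,r))$ is constant on the interval $[|I_m(x)|_p,|I_{m-1}(x)|_p)$ thus fails.

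The repair is to sandwich the numerator as well as the denominator. For $|I_m(x)|_p\leq r<|I_{m-1}(x)|_p$ one has $I_m(x)\subseteq B(x,r)\subseteq I_{m-1}(x)$, hence
\begin{align*}
\frac{\log\nu_\rho(I_{m-1}(x))}{\log|I_m(x)|_p}\ \leq\ \frac{\log\nu_\rho(B(x,r))}{\log r}\ \leq\ \frac{\log\nu_\rho(I_m(x))}{\log|I_{m-1}(x)|_p}.
\end{align*}
By the Gibbs bound $\log\nu_\rho(I_m(x))=S_m\rho(x)-mP_n(\rho)+O(1)$, and since $\rho\circ\pi_n$ is H\"older on the compact shift $\Sigma_n$ (hence bounded) while $a_m(x)\leq n$, the one-step increments of both $S_m\rho$ and $S_m\log\psi$ are $O(1)$. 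Dividing through by $m$, both outer quotients converge to $(P_n(\rho)-\overline\rho(x))/\lambda_p(x)$, and the sandwich closes. With this correction your argument is complete.
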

\noindent Since both $\overline{\rho}$ and $\lambda_p$ are constant $\nu_\rho$-almost everywhere, this yields the desired relation, analogous to equation \eqref{EcuWeiss}.
Let $L_{p,n}$ denote the Lyapunov spectrum associated with $\left(T_{p,n},p\Z_{p,n}\setminus F\right)$, that is, $L_{p,n}(\alpha)=\dim_{\mathrm{H}}(J_{p,n}(\alpha))$ where $J_{p,n}(\alpha)=\{x\in p\Z_{p,n}\setminus F:\lambda_{p}=\alpha\}$.

\begin{theorem}\label{LyapunovSpectrumCompactAproximation}
    For each $\alpha \in[\log p,n\log p]$,
    \begin{align}\label{ECCU}
        L_{p,n}(\alpha)=\dfrac{1}{\alpha}\inf_{t\in \R}\{P_{n}(-t\log \psi )+t\alpha\}=\dfrac{1}{\alpha}P_{n}(-t_{\alpha}\log\psi)+t_{\alpha},
    \end{align}where $t_{\alpha}$ is the unique real number $t$ satisfying 
    \begin{align}\label{ECCU2}
        -\alpha=\log p\cdot  \left(\dfrac{n}{p^{tn}-1}-\dfrac{p^{t}}{p^{t}-1}\right).
    \end{align}
    Moreover, there exists a unique equilibrium state $\nu_{t_{\alpha}}$ of the potential $-t_{\alpha}\log\psi$ which is supported on  $\{x\in p\Z_{p,n}\setminus F:\lambda_{p}(x)=\alpha\}$ .
\end{theorem}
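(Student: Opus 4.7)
The strategy combines the thermodynamic formalism of Theorem \ref{PrincipalPesinWeiss} with the Lyapunov/pointwise-dimension identity of Theorem \ref{Theorem 2.3 Weiss}, applied to the one-parameter family of H\"older potentials $\rho_t := -t\log\psi$. Since $\log\psi(x) = a_1(x)\log p$ depends only on the first symbol, the pressure on $\Sigma_n$ admits the closed form
\begin{equation*}
P_n(t) := P_n(-t\log\psi) = \log(p-1) + \log(1-p^{-tn}) - \log(p^t-1),
\end{equation*}
which is real analytic and strictly convex on $\R$. A direct computation gives
\begin{equation*}
P_n'(t) = \log p \cdot \left(\frac{n}{p^{tn}-1} - \frac{p^t}{p^t-1}\right),
\end{equation*}
and $-P_n'$ is a strictly increasing bijection from $\R$ onto $(\log p, n\log p)$. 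Hence for each such $\alpha$ there is a unique $t_\alpha$ with $-P_n'(t_\alpha) = \alpha$, which is exactly the relation \eqref{ECCU2}; the endpoints are handled by passing to the limit. Standard thermodynamic formalism on the finite-alphabet shift $\Sigma_n$ supplies, for every $t\in\R$, a unique ergodic Gibbs/equilibrium measure $\nu_t$ for $\rho_t$, with entropy $h_{\nu_t} = P_n(t) - tP_n'(t)$.

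For the lower bound, ergodicity of $\nu_{t_\alpha}$ and Birkhoff's theorem yield $\lambda_p(x) = \int \log\psi\,d\nu_{t_\alpha} = -P_n'(t_\alpha) = \alpha$ at $\nu_{t_\alpha}$-a.e.\ $x$, so $\nu_{t_\alpha}(J_{p,n}(\alpha)) = 1$. Applying Theorem \ref{Theorem 2.3 Weiss} with $\rho = -t_\alpha\log\psi$ and using $\overline{\rho}(x) = -t_\alpha\alpha$ almost everywhere, I obtain
\begin{equation*}
d_{\nu_{t_\alpha}}(x) = \frac{h_{\nu_{t_\alpha}}}{\alpha} = \frac{P_n(-t_\alpha\log\psi) + t_\alpha\alpha}{\alpha} =: s_\alpha,
\end{equation*}
so the mass distribution principle gives $L_{p,n}(\alpha) \geq s_\alpha$.

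For the upper bound, fix $t \in \R$ and $\epsilon > 0$, and write $J_{p,n}(\alpha) \subseteq \bigcup_{K\geq 1} E_K$, where $E_K$ consists of all $x$ with $|S_k\log\psi(x) - k\alpha| < k\epsilon$ for every $k \geq K$. For each $k \geq K$, the set $E_K$ is covered by those level-$k$ cylinders $[w]$ satisfying $|S_k\log\psi - k\alpha| < k\epsilon$, each of $p$-adic diameter $e^{-S_k\log\psi} \leq e^{-k(\alpha-\epsilon)}$. By the Gibbs property of $\nu_t$, such a cylinder has $\nu_t$-measure at least $c\exp(-k(P_n(t)+t\alpha+|t|\epsilon))$, hence their number is at most $c^{-1}\exp(k(P_n(t)+t\alpha+|t|\epsilon))$. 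For any $s > (P_n(t)+t\alpha)/\alpha$ and $\epsilon$ small enough, the resulting $s$-Hausdorff sums vanish as $k\to\infty$. Letting $\epsilon\to 0$ and then taking the infimum over $t$ (attained at $t_\alpha$ by the stationarity condition $-P_n'(t)=\alpha$) gives $L_{p,n}(\alpha)\leq s_\alpha$. Uniqueness of $\nu_{t_\alpha}$ is automatic from the strict convexity of $P_n$.

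The main technical obstacle lies in the covering step: one must verify that the Gibbs constant for $\nu_t$ is uniform in the cylinder level $k$, and that the $\epsilon$-perturbations in both the cylinder diameters and their cardinalities do not destroy the vanishing of the Hausdorff sum. These points are handled by the finiteness of the alphabet $E_n$ and standard Gibbs estimates for finite-alphabet full shifts, in the spirit of \cite{io, an}.
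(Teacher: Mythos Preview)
Your proposal is correct and follows essentially the same line as the paper's proof: both compute the pressure $P_n(-t\log\psi)$ and its derivative explicitly, use Theorem~\ref{Theorem 2.3 Weiss} (together with Ruelle's formula $\int\log\psi\,d\nu_t=-P_n'(t)$) to show that the equilibrium state $\nu_{t_\alpha}$ is supported on $J_{p,n}(\alpha)$ with pointwise dimension $s_\alpha=(P_n(-t_\alpha\log\psi)+t_\alpha\alpha)/\alpha$, and deduce the lower bound via the mass distribution principle. The only substantive difference is in the upper bound: the paper simply invokes the Legendre-transform identity from the Pesin--Weiss multifractal framework, citing \cite{yh} without further argument, whereas you give a self-contained cylinder-covering estimate using the Gibbs property of $\nu_t$ to count admissible cylinders. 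Your explicit covering argument is, incidentally, a finite-alphabet version of precisely the estimate the paper later carries out in Section~4 for the full (non-compact) system in proving Theorem~\ref{PrincipalSinCalculoExplicito}; so your route is more uniform with the rest of the paper, while theirs is shorter here at the cost of a black-box citation.
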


\begin{proof}
The middle term in equation \eqref{ECCU} is known to be the Legendre transform of $P_{n}$. For each $t\in \R$, consider the H\"older continuous potential $-t\log\psi(x)$, and let $\nu_{t,n}$ denote its unique equilibrium state. By Theorem \ref{Theorem 2.3 Weiss}, for $\nu_{t,n}$-almost every $x\in p\Z_{p,n}\setminus F$, we have
    \begin{align}\label{EcuCorollaryWeiss}
        \lambda_{p}(x)=\dfrac{P
        _{n}(-t\log \psi)}{d_{\nu_{t,n}}(x)-t},
    \end{align} where $\overline{\rho}(x)=\overline{-t\log\psi}(x)=t\lambda_{p}(x)$. Since $\nu_{t,n}$ is exact dimensional, it follows that $\lambda_{p}(x)$ is constant $\nu_{t,n}$-almost everywhere. Moreover, $\lambda_{p}(x)=-\frac{d}{dt}P_{n}(-t\log\psi)$ (see \cite[Chapter 5]{pu}). Therefore, since the function $t\mapsto P_{n}\left(-t\log\psi\right)$ is strictly convex, its derivative is injective, and the function $L_{p,n}(\alpha)$ is given by the Legendre transform of $P_{n}(-t\log \psi)$ (see \cite{yh}). This yields the infimum in equation \eqref{ECCU}. As the equilibrium states of geometric potentials $-t\log \psi$ support the level sets of the Lyapunov exponent, the values $\alpha$ in the domain of $\lambda_p$ correspond precisely to the range of $-\frac{d}{dt}P_{n}(-t\log\psi)$. Since $P_{n}(-t\log\psi)=\log(p-1)+\log\left(\frac{p^{tn}-1}{p^{tn}(p^{t}-1)}\right)$, we obtain
\begin{align*}
    \dfrac{d}{dt}P_{n}(-t\log\psi)=\log p\cdot \left(\dfrac{n}{p^{tn}-1}-\dfrac{p^{t}}{p^{t}-1}\right).
\end{align*}The range of this derivative determines the domain of $L_{p,n}$, namely $[\log p,n\log p]$. By the convexity of $t\mapsto P_{n}(-t\log\psi)$, the infimum in equation \eqref{ECCU} is attained at $t_{\alpha}$, the unique real solution of $\frac{d}{dt}P_{n}(-t\log\psi)|_{t={t_{\alpha}}}=-\alpha$.
\end{proof}
\begin{remark}
    \noindent Note that $P_{n}(-t\log\psi)=\log(p-1)+\log\left(\frac{p^{tn}-1}{p^{tn}(p^{t}-1)}\right)$ implies that the topological entropy $h(T_{p,n})=\log(p-1)+\log n$, since $P(0)=h(T_{p,n})$.
\end{remark}
\begin{example}
 Let us compute $t_\alpha$ explicitly for $n=2.$
 Set $X=p^t$, and $k=\frac{-\alpha}{\log p}$. From equation \eqref{ECCU2} we must solve
 \begin{align}\label{EcuEjemploP2}
     k=\dfrac{2}{X^2-1}-\dfrac{X}{X-1},
 \end{align}
 which simplifies to $$(k+1)X^2+X-(k+2)=0.$$ Thus
 $$X=\dfrac{-1\pm\sqrt{(2k+3)^2}}{2(k+1)}=\dfrac{-1\pm |2k+3|}{2(k+1)}.$$
The only admissible solution of equation \eqref{EcuEjemploP2} is $X=-\frac{k+2}{k+1}$, then $p^{t_\alpha}=-\frac{k+2}{k+1}$. Note that when $\alpha=-\frac{3}{2}\log p$, equation \eqref{EcuEjemploP2} has no solution; but since $\frac{d}{dt}P(-t\log\psi)|_{t=0}=-\frac{3}{2}\log p$, we obtain $t_{\frac{3}{2}\log p}=0$ due to injectivity of $\frac{d}{dt}P_{2}(-t\log \psi)$. Using $P_{2}(-t\log \psi)=\log(p-1)+\log\left(\frac{p^{2t}-1}{p^{2t}(p^{t}-1)}\right)$ and substituting $k=-\frac{\alpha}{\log p }$, we obtain
 \begin{align*}
     P_{2}(-t_{\alpha}\log \psi ) &=\log(p-1)+\log\left(\dfrac{-k-1}{(k+2)^{2}}\right)\\
     &=\log(p-1)+\log \log p+\log(\alpha-\log p)-2\log(2\log p-\alpha).
 \end{align*}
 From direct computation, Theorem \ref{LyapunovSpectrumCompactAproximation} implies that
 \begin{align*}
     L_{p,2}(\alpha)&=\dfrac{\log(p-1)+\log \log p}{\alpha}+\log(\alpha-\log(p))\left(\frac{1}{\alpha}-\frac{1}{\log p}\right)\\
     &+\log(2\log p-\alpha)\left(\frac{1}{\log p}-\frac{2}{\alpha}\right),
 \end{align*}and $L_{p,2}(\frac{3}{2}\log p)=\frac{2}{3}\left(\frac{\log(p-1)+\log 2}{\log p}\right)$.
 From this, we can estimate that $L_{p,2}(\log p)=\frac{\log(p-1)}{\log p}$, and $L_{p,2}(2\log p)=\frac{\log(p-1)}{2\log p}$. The maximum value of $L_{p,2}$ (see Lemma \ref{BowenFormula}) is the only real $s$ such that $p^{2s}=(p-1)p^{s}+(p-1)$ which is 
 \begin{align*}
     \dim_{\mathrm H}(p\Z _{p,2})=\log_{p}\left(\dfrac{p-1+\sqrt{p^{2}+2p-3}}{2}\right).
 \end{align*} From Theorem \ref{LyapunovSpectrumCompactAproximation}, this value is attained at 
 \begin{align*}
     \alpha=\log p \cdot\left(\dfrac{p-1+\sqrt{D}}{p-3+\sqrt{D}}-\dfrac{2}{D+(p-1)\sqrt{D}}\right),
 \end{align*} where $D=p^{2}+2p-3$.
\end{example}
\begin{remark}
    In contrast with Theorem \ref{PrincipalTheorem}, to obtain an explicit expression of the Lyapunov spectrum $L_{p,n}$ in Theorem \ref{LyapunovSpectrumCompactAproximation} one must find the positive root of 
    \begin{align*}
        X^{n}(K+1)+X^{n-1}+\cdots +X+K-n=0,
    \end{align*} where $K=\frac{-\alpha}{\log p}$. Analogously, from Lemma \ref{BowenFormula} the Haussdorf dimension $\dim_{\mathrm H}\left(p\Z_{p,n}\setminus F\right)$ is obtained as the positive root of 
    \begin{align*}
        X^{n+1}+pX^{n}+p-1=0.
    \end{align*} Moreover, for the explicit value $\alpha=\frac{n+1}{2}\log p$ we have $t_{\alpha}=0$, since $\frac{d}{dt}P_{n}(-t\log \psi )|_{t=0}=-\frac{n+1}{2}\log p$. Consequently, 
    \begin{align*}
        L_{p,n}\left(\frac{n+1}{2}\log p\right)=\frac{2}{n+1}\left(\frac{\log(p-1)+\log n}{\log p}\right).
    \end{align*}
\end{remark}

\section{Proof of main theorems}

 We begin this section by developing the thermodynamic formalism associated with the Schneider map. Recall that $\log \psi $ is locally constant on cylinders of length 1. Hence, from equation \eqref{EcuPressure} we obtain 
\begin{align}\label{TopPressureSarig}
    P\left(-t\log \psi\right)=\lim_{n\rightarrow \infty}\dfrac{1}{n}\log\left(\sum_{(a_i,b_i)_{i=1}^{n}\in E^n} \, \prod_{k=1}^{n}(p^{a_{k}})^{-t}\right)=\log\left(\dfrac{p-1}{p^{t}-1}\right).
\end{align}
By Sarig's results on Countable Markov shifts (see \cite{sa2}) $P_{n}(-t\log \psi)\to P(-t\log \psi)$ as $n \to \infty$. Recall $\lim_{t\to 0^{+}}P(t)=\infty$, then for each $\alpha> \log p$ and $n\in \N$ sufficiently large there exists a unique $t_{\alpha,n}\in \R$ such that $-\alpha=\frac{d}{dt}P_{n}(-t_{\alpha,n}\log \psi)$. Let $\nu_{\alpha,n}$ denote the equilibrium state of $-t_{\alpha,n}\log\psi$. 

Let $\varepsilon>0$. Then
\begin{align*}
    \alpha=\int_{p\Z_{p,n}}\log \psi \  d\nu_{\alpha,n}=\int_{p\Z_{p,n}\setminus B(0,\varepsilon)}\log\psi \ d\nu_{\alpha,n}+\int_{B(0,\varepsilon)}\log\psi \ d\nu_{\alpha,n}.
\end{align*} Define $M_{\varepsilon}:=\inf\{\log\psi(x):x\in B(0,\varepsilon)\}$. Since $a_1(x)\leq n$ for $x\in p\Z_{p,n}$ we have $M_\varepsilon\leq n\log(p).$ Hence
\begin{align*}
    \alpha \geq \int_{p\Z_{p,n}\setminus B(0,\varepsilon)}\log \psi \ d\nu_{\alpha,n}+M_{\varepsilon}\cdot \nu_{\alpha,n}(B(0,\varepsilon)),
\end{align*}which implies 
\begin{align*}
    \dfrac{\alpha}{M_{\varepsilon}}\geq \nu_{\alpha,n}(B(0,\varepsilon)).
\end{align*}Therefore, for every $\delta>0$ and $n$ sufficiently large, there exists $\varepsilon(\delta)>0$ such that 
\begin{align*}
    \nu_{\alpha,n}\left(p\Z_{p,n}\setminus B(0,\varepsilon)\right)=1- \nu_{\alpha,n}(B(0,\varepsilon))\geq 1-\dfrac{\alpha}{M_{\varepsilon}}\geq 1-\dfrac{\alpha}{n\log(p)}\geq \delta.
\end{align*}In other words, $\{\nu_{\alpha,n}\}_{n\in \N}$ is a tight sequence of probability measures. By Prohorov's theorem, it admits a weak-* convergent subsequence. If $\nu_{\alpha}$ is an accumulation point, then $\nu_{\alpha}(p\Z_{p}\setminus F)=1$. Indeed, $\{t_{\alpha,n}\}_{n\in \N}$ also admits a convergent subsequence, since $\{\frac{d}{dt}P_{n}(-t\log \psi)\}_{n\in \N}$ is a decreasing sequence of functions and $\frac{d}{dt}P_{n}(-t\log \psi)\geq \frac{d}{dt}P(-t\log \psi )$ for all $n$.
If $t_{\alpha}$ is an accumulation point of $\left\{ t_{\alpha,n} \right\}$, then $\nu_{\alpha}$ is an equilibrium state for the potential $-t_{\alpha}\log \psi $ (see \cite[section 6]{io}).
Combining equation  \eqref{EcuPressure} with the thermodynamic formalism for countable Markov shifts (see \cite{sa2}), we obtain the following result.

%From this, we obtain the thermodynamic formalism for the family of geometric potentials $\{-t\log(\psi)\}_{t\in \R}$.
\begin{theorem}
For all $t\in \R$
\begin{align}\label{TopPressure}
    P(-t\log \psi)=\begin{cases}
        \log\left(\dfrac{p-1}{p^{t}-1}\right) & \text{if $t>0$}\\
        \infty & \text{if $t\leq 0$},
    \end{cases}
\end{align} and for each $t>0$ there exists a unique equilibrium state $\nu_t$ of $-t\log\psi$.  If $\alpha=-\frac{d}{dt}P(-t\log \psi)|_{t=t_{\alpha}}$, then $\nu_{t_\alpha}$ is a weak-* accumulation point of the sequence $\{\nu_{\alpha,n}\}_{n\in \N}$.
\end{theorem}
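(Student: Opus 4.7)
The plan is to verify each of the three claims in the order they are asserted: the explicit pressure formula, the existence and uniqueness of the equilibrium state $\nu_t$ for $t>0$, and the identification of $\nu_{t_\alpha}$ as a weak-$\ast$ accumulation point of the sequence $\{\nu_{\alpha,n}\}$.

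\emph{Pressure formula.} Since $\log\psi(x)=a_1(x)\log p$ is locally constant on $1$-cylinders, we have $\mathrm{Var}_k(-t\log\psi)=0$ for every $k\geq 1$, so the periodic-point formula \eqref{EcuPressure} applies. The conjugacy $\pi$ identifies $n$-periodic orbits of $T_p$ with admissible words in $E^n$, and $S_n\log\psi$ along such an orbit equals $\log p\cdot\sum_{i=1}^{n}a_i$. Because $(\Sigma,\sigma)$ is a full shift, the resulting Gurevich-type sum factorises as
$$\sum_{T_p^n x=x}e^{S_n(-t\log\psi)(x)}=\left((p-1)\sum_{a=1}^{\infty}p^{-ta}\right)^n,$$
from which the formula $P(-t\log\psi)=\log\frac{p-1}{p^t-1}$ for $t>0$ follows by summing the geometric series. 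For $t\leq 0$ the inner sum diverges, and I confirm $P(-t\log\psi)=\infty$ via the variational principle: for $t=0$ one uses the topological entropies $h(T_{p,n})=\log((p-1)n)\to\infty$ computed in the remark after Theorem \ref{LyapunovSpectrumCompactAproximation}; for $t<0$ one plugs measures supported on cylinders $I_{(a,b)}$ with arbitrarily large $a$ into $h_\mu+\int -t\log\psi\,d\mu$.

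\emph{Existence and uniqueness of $\nu_t$ for $t>0$.} The symbolic model $(\Sigma,\sigma)$ is a full shift on the countable alphabet $E$, hence topologically mixing and trivially satisfying Sarig's Big Images Property. The potential $-t\log\psi\circ\pi$ depends only on the first coordinate and is therefore locally H\"older with summable variations. For $t>0$, Step~1 gives $P(-t\log\psi)<\infty$. Sarig's Ruelle--Perron--Frobenius theorem for locally H\"older positive recurrent potentials on BIP countable Markov shifts \cite{sa2} then produces a unique Gibbs measure, which is also the unique equilibrium state for $-t\log\psi\circ\pi$. Pushing forward by $\pi$ transports this to the desired unique equilibrium state $\nu_t$ on $p\Z_p\setminus F$.

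\emph{Weak-$\ast$ accumulation.} The discussion preceding the theorem already shows that $\{\nu_{\alpha,n}\}_{n\geq N}$ is tight and that $\{t_{\alpha,n}\}$ is bounded; monotonicity of $n\mapsto\frac{d}{dt}P_n(-t\log\psi)$ combined with $P_n\to P$ forces $t_{\alpha,n}\to t_\alpha$ along any convergent subsequence, where $t_\alpha$ is the unique solution of $-\alpha=\frac{d}{dt}P(-t\log\psi)|_{t=t_\alpha}$. Fix such a subsequence $n_k$ with $\nu_{\alpha,n_k}\rightharpoonup\nu_\alpha$ and $t_{\alpha,n_k}\to t_\alpha$; tightness gives $\nu_\alpha(p\Z_p\setminus F)=1$. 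Since the bound $\int\log\psi\,d\nu_{\alpha,n_k}=\alpha$ provides uniform integrability of $\log\psi$, one obtains $\int -t_{\alpha,n_k}\log\psi\,d\nu_{\alpha,n_k}\to\int -t_\alpha\log\psi\,d\nu_\alpha$. Combining this with upper semicontinuity of the metric entropy along tight sequences on BIP shifts (as in \cite[Section 6]{io}) and the identity $h_{\nu_{\alpha,n_k}}+\int -t_{\alpha,n_k}\log\psi\,d\nu_{\alpha,n_k}=P_{n_k}(-t_{\alpha,n_k}\log\psi)\to P(-t_\alpha\log\psi)$, one obtains the reverse inequality in the variational principle, so $\nu_\alpha$ is an equilibrium state for $-t_\alpha\log\psi$. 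By the uniqueness from Step~2, $\nu_\alpha=\nu_{t_\alpha}$.

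\emph{Main obstacle.} The hardest step is controlling the metric entropy in the weak-$\ast$ limit: on the non-compact space $p\Z_p\setminus F$, entropy is not generally upper semicontinuous, and mass escaping towards $0$ (the "infinite-valuation" end of $\Sigma$) could create a gap between $\limsup h_{\nu_{\alpha,n_k}}$ and $h_{\nu_\alpha}$. The tightness estimate $\nu_{\alpha,n}(B(0,\varepsilon))\leq\alpha/M_\varepsilon$ together with the uniform bound $\int\log\psi\,d\nu_{\alpha,n_k}=\alpha$ is what prevents both loss of mass and thinning of entropy in the limit, so the limit measure indeed realises the pressure.
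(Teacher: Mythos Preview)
Your proof follows essentially the same route as the paper: the explicit pressure computation via \eqref{EcuPressure}, the appeal to Sarig's countable-shift formalism \cite{sa2} for existence and uniqueness of the equilibrium state, and the tightness argument combined with \cite[Section~6]{io} for identifying the weak-$\ast$ limit as an equilibrium state are exactly what the paper does (indeed, the paper's ``proof'' is the discussion immediately preceding the theorem statement). You add welcome detail, particularly the explicit treatment of the case $t\leq 0$, which the paper leaves implicit.

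One small imprecision: the bound $\int\log\psi\,d\nu_{\alpha,n_k}=\alpha$ alone does \emph{not} provide uniform integrability of $\log\psi$---a uniform $L^1$ bound is in general insufficient. What actually gives it here is the Gibbs property of $\nu_{\alpha,n_k}$ together with $t_{\alpha,n_k}\to t_\alpha>0$, which yields uniform exponential tails $\nu_{\alpha,n_k}(\{a_1=a\})\lesssim p^{-(t_\alpha/2)a}$. Since both you and the paper ultimately defer the equilibrium-state identification to \cite[Section~6]{io}, this slip is harmless, but the stated justification should be corrected.
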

\begin{remark}\label{Remark1}
In this setting, the equilibrium state for $-\log\psi$ ( when $t=1$) coincides with the Haar measure. Indeed, $P(-\log \psi)=0$ and $\mu_{p}$ attains the supremum in equation \eqref{DefPressure} since
\begin{align}
    h_{\mu_{p}}-\int \log\psi \ d\mu_{p}= \frac{p}{p-1}\log p-\frac{p}{p-1}\log p=0,
\end{align}where $h_{\mu_{p}}=\frac{p}{p-1}\log p$ (\cite[Theorem 1.1]{hn}) and $\int \log\psi \ d\mu_{p}=\frac{p}{p-1}\log p$ (\cite[Theorem 3]{hw}). Furthermore, $L_{p}$ achieves its maximum value $\dim_{\mathrm{H}}(p\Zp\setminus F)=1$ at $\alpha=\frac{p\log p}{p-1}$.
\end{remark}

\textit{Proof of Theorem \ref{PrincipalSinCalculoExplicito}.}
By Theorem \ref{LyapunovSpectrumCompactAproximation}, for each $n\in \N$ and $\alpha \in [\log p,n\log p]$ 
    \begin{align}\label{Ecu19}
        L_{p,n}(\alpha)=\dfrac{1}{\alpha}\inf\{P_{n}(-t\log \psi)+t\alpha\}.
    \end{align} Since $ p\Z_{p,n}\setminus F\subset  p\Z_{p_,n+1}\setminus F$ for every $n\geq 1$, the sequence $\{L_{p,n}(\alpha)\}_{n\in \N}$ is monotone increasing and bounded above by the Hausdorff dimension of $p\Z_p$, which equals one. From equation \eqref{Ecu19}, for each $\alpha\geq \log p$
    \begin{align}\label{Ecu20}
        \lim_{n\to \infty}\dfrac{1}{\alpha}\inf_{t\in \R}\{P_{n}(-t\log \psi)+t\alpha\}\leq L_p(\alpha),
    \end{align}since $J_{p,n}(\alpha)\subset J_{p}(\alpha)$ for all $n\in \N$. To show that the Legendre transform of $P$ is bounded above by $L_p(\alpha)$, we employ the notion of infimally convergence. We say that the sequence $\{P_{n}\}_{n\in \N}$ \textit{converges infimally} to $P$ if, for every $t\in \R$ \begin{align}\label{InfimalCon} 
    \lim_{\beta\rightarrow 0}\liminf_{n\rightarrow \infty}\inf\{P_{n}
(-s\log \psi):|t-s|<\beta\}=\lim_{\beta\rightarrow 0}\limsup_{n\rightarrow \infty}\inf\{P_{n}
(-s\log\psi):|t-s|<\beta\}=P(-t\log\psi).
\end{align} Recall that the map $t\mapsto P_{n}(-t\log \psi)$ is non-increasing. Hence, for any $\beta>0$, $$\inf\{P_{n}(-s\log \psi):|t-s|<\beta\}=P(t+\beta).$$ On the other hand, from \cite[Theorem 2]{sa2}, for all $t\in \R$, 
\begin{align}\label{SarigConvergence}
    \lim_{n\rightarrow \infty}P_{n}(-t\log \psi)=P(-t\log \psi).
\end{align} We obtain $\limsup_{n\rightarrow \infty}$ and $\liminf_{n\rightarrow \infty}$ in equation \eqref{Ecu20} coincide for every $\beta>0$. Therefore, $\{P_{n}\}_{n\in \N}$ converges infimally to $P$.

By Wijsman's work on infimally convergence (see \cite[Theorem 6.2]{wi2}), infimmally convergence of $\{P_{n}\}_{n\in \N}$ to $P$ is equivalent to infimmaly convergence of their Legendre transforms. In particular, the sequence of functions $\{\frac{1}{\alpha}\inf_{t\in \R}\{P_{n}(-t\log \psi)+t\alpha\}\}_{n\in \N}$ converge infimally to $\frac{1}{\alpha}\inf_{t\in \R}\{P(-t\log \psi)+t\alpha\}$ as functions on $\alpha$. Moreover, the function obtained by pointwise convergence dominates that arising from infimally convergence (see \cite[section 5]{wi2}). This implies that $$\dfrac{1}{\alpha}\inf_{t\in \R}\{P(-t\log \psi)+t\alpha\}\leq \lim_{n\rightarrow \infty}\dfrac{1}{\alpha}\inf_{t\in \R}\{P_{n}(-t\log \psi)+t\alpha\}.$$ Together with inequality \eqref{Ecu20} we get 
\begin{align}\label{AimFOR}
    \dfrac{1}{\alpha}\inf_{t\in \R}\{P(-t\log \psi)+t\alpha\}\leq L_{p}(\alpha).
\end{align} Now we prove the reverse inequality. Fix $t>0$ and $\varepsilon >0$. Since the function $t\mapsto P(-t\log \psi)$ is strictly decreasing on $[0,\infty)$, by definition of topological pressure we obtain
\begin{align*}
    P_{\varepsilon}:=P(-(t+\varepsilon)\log \psi)-P(-t\log\psi)=P(-(t+\varepsilon)\log\psi)-P(-t\log\psi)<0.
\end{align*} Equation \eqref{EcuPressure} implies there exists $M\in \N$ such that for all $k\geq M$
\begin{align*}
    \sum_{x:T^{k}_{p}x=x}e^{-(t+\varepsilon)S_{k}\log\psi-kP(-t\log\psi)}<e^{\frac{P_{\varepsilon}k}{2}}<1.
\end{align*}Let $\delta>0$ such that $\frac{P_{\varepsilon}}{2}-\delta\frac{P(-t\log\psi)}{\alpha}<0$. On the other hand, for every $y\in J_{p}(\alpha)$ there exists $N_{y}\in \N$ such that for all $k\geq N_{y}$
\begin{align*}
    \left|\dfrac{S_{k}\log \psi}{k}-\alpha\right|<\delta.
\end{align*}Using this fact, we construct for each $k\in \N$ a countable cover of cylinder sets with diameter at most $p^{-k}$. Notice that each interval $I_{(a_{i},b_{i})_{i=1}^{n}}$ satisfies
\begin{align*}
    N[(a_{i},b_{i})_{i=1}^{n}]:=\min\{N_{y}:y\in J_{p}(\alpha)\cap I_{(a_{i},b_{i})_{i=1}^{n}}\}<\infty.
\end{align*} We define 
\begin{align*}
    \overline{A_{k}}:=\{I_{(a_{i},b_{i})_{i=1}^{k}}:N[(a_{i},b_{i})_{i=1}^{n}]\leq k\},
\end{align*}and for every $n>k$ we set
\begin{align*}
    A_{n}:=\{I_{(a_{i},b_{i})_{i=1}^{n}}:N[(a_{i},b_{i})_{i=1}^{n}]=n\}.
\end{align*}Let $\mathcal{A}_{k}=\overline{A}_{k}\cup \bigcup_{n>k}A_{n}$. The set $\mathcal{A}_{k}$ is a countable cover of $J_{p}(\alpha)$ since each $A_{n}$ is a countable collection of cylinders. On the other hand, recall that $T_{p}$ is distance expansive, then for any cylinder $I_{(a_{i},b_{i})_{i=1}^{n}}\in \mathcal{A}_{k}$ we have that $|I_{(a_{i},b_{i})_{i=1}^{n}}|_{p}\leq p^{-n}$. 

Recall that $|I_{(a_{i},b_{i})_{i=1}^{n}}|_{p}=e^{-S_{n}\log\psi(y)}$ for all $y\in I_{(a_{i},b_{i})_{i=1}^{n}}$. Thus, following the definition of Hausdorff dimension, for $l=t+\varepsilon+\frac{P(-t\log \psi)}{\alpha}$ we get
\begin{align*}
    \mathcal{H}^{l}_{p^{-k}}(J_{p}(\alpha))
    &\leq \sum_{I_{(a_{i},b_{i})_{i=1}^{n}}\in \mathcal{A}_{k}}e^{-lS_{n}\log\psi(x_{(a_{i},b_{i})})},
\end{align*}where $x_{(a_{i},b_{i})}$ is the periodic point of $I_{(a_{i},b_{i})^{n}_{i=1}}$ with period $n$. Since $N[I_{(a_{i},b_{i})_{i=1}^{n}}]\leq n$,
\begin{align*}
    -lS_{n}\log\psi(x_{(a_{i},b_{i})})&=-\left(t+\varepsilon+\dfrac{P(-t\log\psi)}{\alpha}\right)S_{n}\log\psi(x_{(a_{i},b_{i})})\\
    & \leq -(t+\varepsilon)S_{n}\log\psi(x_{(a_{i},b_{i})}))-n\frac{P(-t\log\psi)}{\alpha}(\delta-\alpha).
\end{align*} Therefore,
\begin{align*}
    \mathcal{H}^{l}_{p^{-k}}(J_{p}(\alpha))&\leq \sum_{I_{(a_{i},b_{i})_{i=1}^{n}}\in \mathcal{A}_{k}}e^{-(t+\varepsilon)S_{n}\log\psi(x_{(a_{i},b_{i})})-n\frac{P(-t\log\psi)}{\alpha}(\delta-\alpha)}\\
    & = \sum^{\infty}_{n=k}e^{-\frac{n\delta P(-t\log\psi)}{\alpha}}\sum_{I_{(a_{i},b_{i})_{i=1}^{n}}\in \mathcal{A}_{k}}e^{-(t+\varepsilon)S_{n}\log\psi(x_{(a_{i},b_{i})}))+nP(-t\log\psi)}\\
    & = \sum^{\infty}_{n=k}e^{-\frac{n\delta P(-t\log\psi)}{\alpha}}\sum_{x:T_{p}^{n}x=x}e^{-(t+\varepsilon)S_{n}\log\psi(x)+nP(-t\log\psi)}.
\end{align*}Thus, for all $k\geq M$ we have that 
\begin{align}\label{Ecu21}
    \mathcal{H}^{l}_{p^{-k}}(J_{p}(\alpha))&\leq \sum^{\infty}_{n=k}e^{-\frac{n\delta P(-t\log\psi)}{\alpha}+\frac{P_{\varepsilon}n}{2}}=\sum^{\infty}_{n=k}e^{n(\frac{P_{\varepsilon}}{2}-\frac{\delta P(-t\log\psi )}{\alpha})}.
\end{align} Since $\dfrac{P_{\varepsilon}}{2}-\frac{\delta P(-t\log(\psi))}{\alpha}<0$, we obtain a geometric series on the right-hand side of inequality \eqref{Ecu21}. Therefore,
\begin{align*}
    \mathcal{H}^{l}(J_{p}(\alpha))=\lim_{k\to \infty}\mathcal{H}^{l}_{p^{-k}}(J_{p}(\alpha))=0.
\end{align*}This implies that $\dim_{\mathrm{H}}(J_{p}(\alpha))=L_{p}(\alpha)\leq t+\varepsilon+\frac{P(-t\log(\psi))}{\alpha}$ for all $t>0$ and $\varepsilon>0$. Thus,
\begin{align*}
    L_{p}(\alpha)\leq \dfrac{1}{\alpha}\inf\{P(-t\log(\psi))+t\alpha:t\in \R \},
\end{align*}and together with inequality \eqref{AimFOR} equality follows. Finally, from construction, it is straightforward to note that the domain of $L_{p}$ is  $[\log(p),\infty)=\bigcup_{n=1}^{\infty}[\log(p),n\log(p)]$. The fact that the infimum is attained in a unique $t_{\alpha}>0$ such that $\frac{d}{dt}P(-t\log(\psi))|_{t=t_{\alpha}}=-\alpha$ is a consequence of Theorem \ref{LyapunovSpectrumCompactAproximation} and that $\{\frac{d}{dt}P_{n}(-t\log(\psi))\}_{n\in \N}$ converge to $\frac{d}{dt}P(-t\log(\psi))$. \qed

\textit{Proof of Theorem \ref{PrincipalTheorem}} Let $\alpha\geq\log p $. Since $\frac{d}{dt}P(-t\log \psi )=-\log p \cdot \frac{p^{t}}{p^{t}-1}$, the value  $t_{\alpha}$ is the unique positive real number such that $\alpha=\log p\cdot \frac{p^{t_\alpha}}{p^{t_\alpha}-1}$, that is,
\begin{align*}
    t_{\alpha}=\log_{p}\left(\dfrac{\alpha}{\alpha-\log p}\right).
\end{align*} A direct computation then gives 
\begin{align*}
    P(-t_{\alpha}\log \psi)=\log(p-1)+\log(\alpha-\log p)-\log\log p.
\end{align*}Substituting these values into equation \eqref{EcuSInCalculoExplicito} yields precisely equation \eqref{Ecu2PrincipalTheorem}. \
\qed

\begin{example}
    If $x\in p\Z_p \setminus F$ is a periodic point of $T_{p}$ with period $n$, then there exist positive integers $a_{1},...,a_{n}$ such that $\lambda_{p}(x)=\log p\cdot \frac{a_{1}+\cdots +a_{n}}{n}=\log p\cdot A(\{a_{i}\})$, where $A(\{a_{i}\})$ denotes the arithmetic mean of the digits $\{a_{1},...,a_{n}\}$. 
If $x$ is a fixed point of $T_{p}$ in $I_{(a,b)}$, then $\lambda_{p}(x)=a\log p$, and Theorem \ref{PrincipalTheorem} yields
\begin{align*}
        L_{p}(a\log p)=\begin{cases}
            \dfrac{\log(p-1)+\log(a-1)}{a\log p}+\dfrac{\log\left(1+\frac{1}{a-1}\right)}{\log p} & \text{ if $a\neq 1$}\\
            \dfrac{\log(p-1)}{\log p} & \text{ if $a=1$}
        \end{cases}.
    \end{align*}If $x$ is periodic and $\lambda_{p}(x)=\log p\cdot A(\{a_{i}\})$ such that $a_{i}\neq a_{j}$ for some $1\leq i,j\leq n$, then
    \begin{align*}
        L_{p}\left(\log p\cdot A(\{a_{i}\})\right)= \dfrac{\log(p-1)+\log(A(\{a_{i}\})-1)}{A(\{a_{i}\})\log p}+\dfrac{\log\left(1+\frac{1}{A(\{a_{i}\})-1}\right)}{\log p}.
    \end{align*}
\end{example}
\begin{remark}
     It is worth noting that fixed points $x$ of $T_{p}$ satisfy 
    \begin{align}\label{PetalicNumbers}
        x^{2}+b_{1}(x)x-p^{a_{1}(x)}=0.
    \end{align} In contrast, fixed points of the Gauss map correspond to the so-called metallic means (see \cite{ds}). These are positive solutions of $y^{2}+ny-1=0$. For instance, when $n=1$, the golden ratio $g$ satisfies that $g-1$ is a fixed point of the Gauss map.
\end{remark}

\section{Rational approximation for continued fraction expansions}\label{geompotential}

 We now turn to the rational approximations obtained from the truncations in Schneider's continued fraction expansion. For each $n\in \N$ and $x\in p\Zp\setminus F$, we consider the limit
\begin{align}\label{EcuRationalApprox}
    -\lim_{n\to \infty}\dfrac{1}{n}\log \left|x-\dfrac{p_{n}(x)}{q_{n}(x)}\right|_{p},
\end{align}whenever it exists. Define $\psi_{2}:p\Z_{p}\setminus F\rightarrow \R$ by 
\begin{align*}
    \psi_{2}(x)=p^{-a_{1}(x)}\left|x-\dfrac{p^{a_{1}(x)}}{b_{1}(x)}\right|^{-1}_{p}.
\end{align*}Next, we introduce the real-valued function $\varphi:p\Z_{p}\setminus F\to \R$ defined by $\varphi(x)=p^{a_{2}(x)}\psi_{2}(T_{p}(x))/\psi_{2}(x)$.
\begin{lemma}\label{Lemma1}
    The limit in equation \eqref{EcuRationalApprox} coincides with the pointwise limit of Birkhoff's means for $\log \varphi$. Indeed,
    \begin{align*}
        -\lim_{n\to \infty}\dfrac{1}{n}\log \left|x-\dfrac{p_{n}(x)}{q_{n}(x)}\right|_{p}=\lim_{n\rightarrow \infty}\dfrac{1}{n}\sum^{n-1}_{k=0}\log\varphi\left(T_p^kx\right)
    \end{align*}
\end{lemma}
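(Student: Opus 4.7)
\emph{Proof proposal.} The plan is to reduce both sides of the claimed identity to explicit expressions in the digits $a_i(x)$ and then to show that they differ only by a vanishing correction. First I would derive the recurrence relations for the convergents of Schneider's continued fraction, namely $p_0=0$, $p_1=p^{a_1(x)}$, $q_0=1$, $q_1=b_1(x)$, and $p_n=b_n(x)p_{n-1}+p^{a_n(x)}p_{n-2}$ (and analogously for $q_n$), together with the standard determinant identity $p_n q_{n-1}-p_{n-1}q_n=(-1)^{n-1}p^{a_1(x)+\cdots+a_n(x)}$. Writing $x$ as the value of the same truncated fraction with the exact tail $\tau=b_n(x)+T_p^n(x)$ substituted for $b_n(x)$, a short algebraic manipulation yields the explicit error formula
\[
x-\frac{p_n(x)}{q_n(x)}=\frac{(-1)^{n}\,T_p^n(x)\,p^{a_1(x)+\cdots+a_n(x)}}{q_n(x)\bigl(q_n(x)+T_p^n(x)\,q_{n-1}(x)\bigr)}.
\]

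At this point the non-Archimedean geometry produces the key simplification: because $|b_n(x)q_{n-1}|_p=1>|p^{a_n(x)}q_{n-2}|_p$, the strong triangle inequality inductively forces $|q_n(x)|_p=1$, and the same reasoning gives $|q_n(x)+T_p^n(x)q_{n-1}(x)|_p=1$. Combined with $|T_p^n(x)|_p=p^{-a_{n+1}(x)}$, the error formula collapses to
\[
-\tfrac{1}{n}\log\bigl|x-p_n(x)/q_n(x)\bigr|_p=\tfrac{\log p}{n}\bigl(a_1(x)+a_2(x)+\cdots+a_{n+1}(x)\bigr).
\]
For the right-hand side of the lemma, specializing the above with $n=1$ gives $|x-p^{a_1(x)}/b_1(x)|_p=p^{-a_1(x)-a_2(x)}$, so $\psi_2(x)=p^{a_2(x)}$. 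The definition of $\varphi$ then yields $\log\varphi(x)=a_2(x)\log p+\log\psi_2(T_p x)-\log\psi_2(x)$, and the Birkhoff sum telescopes at once to $S_n\log\varphi(x)=\log p\sum_{i=3}^{n+2}a_i(x)$.

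Comparing the two quantities, their difference equals $\frac{\log p}{n}\bigl(a_1(x)+a_2(x)-a_{n+2}(x)\bigr)$, so the lemma reduces to showing $a_{n+2}(x)/n\to 0$ whenever either limit exists. This is a standard Cesaro observation: writing $\bar a_m(x):=\frac{1}{m}\sum_{i=1}^m a_i(x)$, convergence of $\bar a_m$ gives $\frac{a_m(x)}{m}=\bar a_m(x)-\frac{m-1}{m}\bar a_{m-1}(x)\to 0$, and either of the two limits in the lemma, once it exists, immediately implies convergence of $\bar a_m$. The main technical obstacle is the book-keeping in the first step, namely producing the closed form for $x-p_n/q_n$ and verifying that both $q_n(x)$ and $q_n(x)+T_p^n(x)q_{n-1}(x)$ are $p$-adic units; once those ultrametric facts are secured the rest is a clean algebraic comparison, and the resulting proof is considerably shorter than its Archimedean analogue for the Gauss map.
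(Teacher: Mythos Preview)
Your proposal is correct but takes a genuinely different route from the paper. You compute both sides of the identity explicitly in terms of the digits $a_i(x)$: first you derive the closed form $|x-p_n/q_n|_p=p^{-(a_1+\cdots+a_{n+1})}$ from the convergent recurrences and the ultrametric fact $|q_n|_p=1$, then you compute $\psi_2(x)=p^{a_2(x)}$ and hence $\log\varphi(x)=a_3(x)\log p$ (this is exactly the content of Proposition~\ref{Proposition 52}, which the paper proves \emph{after} the lemma), and finally you compare the two digit sums, disposing of the residual term $a_{n+2}/n$ by a Ces\`aro argument.

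The paper's proof never computes $|x-p_n/q_n|_p$ in closed form. Instead it establishes the telescoping identity
\[
\varphi(T_p^{k}x)=\dfrac{|x-p_{k+1}/q_{k+1}|_p}{|x-p_{k+2}/q_{k+2}|_p}
\]
directly from the definition of $\psi_2$ together with the local expansion relation $|T_p(y)-T_p(z)|_p=p^{a}|y-z|_p$ on each cylinder (equation~\eqref{Equation1}). The Birkhoff sum then collapses to $-\log|x-p_{n+1}/q_{n+1}|_p+\log|x-p_1/q_1|_p$, so that only a harmless index shift $n\mapsto n+1$ remains and no Ces\`aro step is needed. Your approach is more computational but yields more: it produces the explicit error formula for $|x-p_n/q_n|_p$ and effectively proves Proposition~\ref{Proposition 52} along the way. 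The paper's approach is slicker for the lemma in isolation, at the price of deferring the identification $\log\varphi=a_3\log p$ to a separate statement.
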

\begin{proof}
    Let $x\in p\Z_{p}\setminus F$. By definition, the second rational approximation satisfies
    \begin{align*}
        \psi_{2}(T_{p}(x))&=p^{-a_{1}(T_{p}(x))}\left|\dfrac{p^{a_{1}(T_{p}(x))}}{b_{1}(T_{p}(x))+T_{p}^{2}(x)}-\dfrac{p^{a_{1}(T_{p}(x))}}{b_{1}(T_{p}(x))}\right|_{p}^{-1}\\&=p^{-a_{2}(x)}\left|\dfrac{p^{a_{2}(x)}}{b_{2}(x)+T_{p}^{2}(x)}-\dfrac{p^{a_{2}(x)}}{b_{2}(x)}\right|_{p}^{-1},
    \end{align*} and by equation \eqref{Equation1} we have $\psi_{2}(T_{p}(x))=p^{-a_{2}(x)-a_{1}(x)}\left|x-\frac{p_{2}(x)}{q_{2}(x)}\right|_{p}$. Hence
    \begin{align*}
        \varphi(x)=\dfrac{\psi_{2}(T_{p}(x))}{\psi_{2}(x)}=\dfrac{\left|x-\dfrac{p_{2}(x)}{q_{2}(x)}\right|_{p}^{-1}}{\left|x-\dfrac{p_{1}(x)}{q_{1}(x)}\right|_{p}^{-1}}.
        %=p^{a_{2}(x)}\dfrac{p^{-a_{1}(x)-a_{2}(x)}\left|x-\dfrac{p_{2}(x)}{q_{2}(x)}\right|_{p}^{-1}}{p^{-a_{1}(x)}|x-\frac{p_{1}(x)}{q_{1}(x)}|_{p}^{-1}} Lo saqué de entre medio para claridad.
    \end{align*} Consequently,
    \begin{align*}
        \sum^{n-1}_{k=0}\log \varphi(x)&=-\sum^{n-1}_{k=0}\log\left|x-\dfrac{p_{k+1}(x)}{q_{k+1}(x)}\right|_{p}-\log\left|x-\dfrac{p_{k}(x)}{q_{k}(x)}\right|_{p}\\
        &= -\log\left|x-\dfrac{p_{n+1}(x)}{q_{n+1}(x)}\right|_{p}+\log\left|x-\dfrac{p_{1}(x)}{q_{1}(x)}\right|_{p}.
    \end{align*} Dividing by $n$ and taking limits yields
    \begin{align*}
        \lim_{n\rightarrow \infty}\dfrac{1}{n}\sum^{n-1}_{k=0}\log\varphi\left(T_p^kx\right)=\lim_{n\rightarrow \infty}-\dfrac{1}{n}\log\left|x-\dfrac{p_{n+1}(x)}{q_{n+1}(x)}\right|_{p}.
    \end{align*}
\end{proof}

\begin{proposition}\label{Proposition 52}
    The potential $\log \varphi$ is locally constant on cylinder sets of length 3. Moreover, for each $x\in p\Z_{p}\setminus F,$
   $$\log \varphi(x)=v_{p}\left(T^{2}_{p}(x)\right)\log p=a_{3}(x)\log(p).$$
\end{proposition}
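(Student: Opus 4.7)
The plan is to compute $\psi_2$ at $x$ and at $T_p(x)$ explicitly in terms of the digits $a_i$, so that the ratio defining $\varphi$ telescopes to a single power of $p$. The key observation is that both $\psi_2(x)$ and $\psi_2(T_p(x))$ are pure powers of $p$, because the differences they measure involve fractions with units in the denominator, so the $p$-adic norm only sees valuations.

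First I would compute $\psi_2(x)$. Using the one-step continued fraction identity $x=\frac{p^{a_1(x)}}{b_1(x)+T_p(x)}$ from \eqref{SchneiderContinued}, I write
\begin{align*}
x-\dfrac{p^{a_1(x)}}{b_1(x)}=p^{a_1(x)}\left(\dfrac{1}{b_1(x)+T_p(x)}-\dfrac{1}{b_1(x)}\right)=\dfrac{-p^{a_1(x)}T_p(x)}{b_1(x)\bigl(b_1(x)+T_p(x)\bigr)}.
\end{align*}
Since $b_1(x)\in\{1,\dots,p-1\}$ is a unit and $|T_p(x)|_p<1$, the ultrametric inequality gives $|b_1(x)+T_p(x)|_p=1$. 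Therefore $\bigl|x-p^{a_1(x)}/b_1(x)\bigr|_p=p^{-a_1(x)}|T_p(x)|_p=p^{-a_1(x)-a_2(x)}$, which yields $\psi_2(x)=p^{a_2(x)}$. Applying the same calculation with $x$ replaced by $T_p(x)$, and using $a_i(T_p(x))=a_{i+1}(x)$, I get $\psi_2(T_p(x))=p^{a_3(x)}$.

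Substituting these two identities into the definition $\varphi(x)=p^{a_2(x)}\psi_2(T_p(x))/\psi_2(x)$ gives
\begin{align*}
\varphi(x)=p^{a_2(x)}\cdot\dfrac{p^{a_3(x)}}{p^{a_2(x)}}=p^{a_3(x)},
\end{align*}
so $\log\varphi(x)=a_3(x)\log p$. Since by definition $a_3(x)=a_1(T_p^2(x))=v_p(T_p^2(x))$, both advertised equalities hold. For the local-constancy claim, note that $a_3$ depends only on the third coordinate $(a_3(x),b_3(x))$ of the symbolic expansion; hence $\log\varphi$ is constant on every cylinder of length $3$.

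The computation is entirely routine once one sees the ultrametric cancellation, so there is no genuine obstacle; the only thing to be careful about is the bookkeeping $a_i(T_p(x))=a_{i+1}(x)$ used to pass from $\psi_2(x)=p^{a_2(x)}$ to $\psi_2(T_p(x))=p^{a_3(x)}$.
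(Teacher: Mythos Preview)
Your proof is correct and follows essentially the same route as the paper: both use the one-step identity $x=p^{a_1(x)}/(b_1(x)+T_p(x))$ together with the fact that $b_1(x)+T_p(x)$ is a $p$-adic unit to reduce everything to valuations. Your version is in fact a bit cleaner, since you isolate the intermediate identity $\psi_2(x)=p^{a_2(x)}$ and then simply shift indices, whereas the paper manipulates the ratio $\psi_2(T_p(x))/\psi_2(x)$ directly without ever stating that formula for $\psi_2$.
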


\begin{proof}Observe that
    \begin{align*}
        \dfrac{\psi_{2}(T_p(x))}{\psi_{2}(x)} &=\dfrac{p^{-a_1\left(T_p(x)\right)}\left| T_p(x)-\dfrac{p^{a_1(T_p(x))}}{b_1(T_p(x))}\right|_p^{-1}}{p^{-a_1(x)}\left| x-\dfrac{p^{a_1(x)}}{b_1(x)} \right|_p^{-1}}\\
        &= p^{a_1(x)-a_1(T_p(x))} \dfrac{\left|\dfrac{b_1(T_p(x))T_p(x)-p^{a_1(T_p(x))}}{b_1(T_p(x))}\right|^{-1}_p}{\left|\dfrac{xb_1(x)-p^{a_1(x)}}{b_1(x)}\right|^{-1}_p}\\
        &=p^{a_1(x)-a_1(T_p(x))}\left|\dfrac{b_1(T_p(x))T_p(x)-p^{a_1(T_p(x))}}{xb_1(x)-p^{a_1(x)}} \right|_p^{-1}.
\end{align*} Using the definition of $T_{p}$, one finds $\left|\frac{p^{a_{1}(x)}}{b_{1}(x)+T_{p}(x)}b_{1}(x)-p^{a_{1}(x)}\right|_{p}=|T_{p}(x)|_{p}|x|_{p}$, so that
\begin{align*}
  \dfrac{\psi_{2}(T_p(x))}{\psi_{2}(x)}  &=p^{a_1(x)-a_2(x)} \left|\dfrac{T_p(x)T_p^2(x)}{xT_p(x)} \right|_p^{-1}\\
        &=p^{a_1(x)-a_2(x)}\cdot p^{v_p(T_p^2(x))-v_p(x)}\\
        &=p^{v_p(T_p^2(x))-v_p(T_p(x))}.
\end{align*}
Therefore, if $x$ and $y$ belong to the same cylinder of length 3 (that is, $a_{i}(x)=a_{i}(y)$ and $b_{i}(x)=b_{i}(y)$ for $1\leq i \leq 3$), then
    \begin{align*}
        \left|\log\varphi(x)-\log\varphi(y) \right|&=\left|\log\left(p^{a_2(x)}\dfrac{\psi_{2}(T_p(x))}{\psi_{2}(x)} \right) -\log\left(p^{a_2(y)}\dfrac{\psi_{2}(T_p(y))}{\psi_{2}(y)} \right)\right|\\
        &=|a_2(x)+v_p(T_p^2(x))-v_p(T_p(x))-a_2(y)-v_p(T_p^2(y))+v_p(T_p(x))|\\
        &=|v_p(T_p^2(x))-v_p(T_p^2(y))|\\
        &=|a_{3}(x)-a_{3}(y)|=0.
    \end{align*}
So $\log \varphi$ is locally constant on such cylinders, and $\log\varphi (x)=a_{3}(x)\log p$ for all $x\in p\Z_{p}\setminus F$. 
\end{proof}
Let $x\in J_{p}(\alpha)$. Since $J_{p}(\alpha)$ is a $T_{p}$-invariant set, we have $\alpha=\lambda_{p}(x)=\lambda_{p}(T_{p}^{2}(x))$. Then, by Lemma \ref{Lemma1} and Proposition \ref{Proposition 52}, 
\begin{align*}
    \alpha =\lim_{n\to \infty}\log p\cdot \dfrac{a_{1}(T_{p}^{2}(x))+\cdots+a_{n}(T_{p}^{2}(x))}{n}=\lim_{n\to \infty}\log p\cdot \dfrac{a_{3}(x)+\cdots+a_{3}(T_{p}^{n-1}(x))}{n}.
\end{align*} Hence
\begin{align*}
     \lambda_{p}(x)=\lim_{n\to \infty}\dfrac{1}{n}\sum^{n-1}_{k=0}\log\varphi(T_{p}^{k}(x))=-\lim_{n\to \infty}\dfrac{1}{n}\log \left|x-\dfrac{p_{n}(x)}{q_{n}(x)}\right|_{p}.
\end{align*}Thus, the Lyapunov exponent of $x$ coincides with the mean exponential rate at which $p$-adic rational approximations converge to $x$. Consequently, Theorem \ref{PrincipalTheorem} implies Corollary \ref{ApproximationSpectrumTHM}. Indeed, Theorem \ref{LyapunovSpectrumCompactAproximation} yields an analogous statement for each subsystem $(T_{p,n},p\Z_{p_,n}\setminus F)$. This allows us to characterize, in terms of dimension theory, sets of $p$-adic integers in $p\Z_{p,n}$ according to the rate at which their rational approximations converge.
\begin{theorem}
    Let $n\in \N$. For each $\alpha\in [\log p,n\log p]$ the Hausdorff dimension of the set of points $x$ in $p\Z_{p,n}\setminus F$ satisfying
    \begin{align*}
        -\lim_{m\to \infty}\dfrac{1}{m}\log \left|x-\dfrac{p_{m}(x)}{q_{m}(x)}\right|_{p}=\alpha,
    \end{align*} equals $\frac{1}{\alpha}P_{n}(-t_{\alpha}\log\psi)+t_{\alpha,n},$ where $P_{n}(-t_{\alpha,n}\log \psi)$ denotes the topological pressure of the potential $-t_{\alpha,n}\log \psi$ and $t_{\alpha,n}$ is the unique real number $t$ satisfying 
    \begin{align*}
        -\alpha=\log p\cdot \left(\dfrac{n}{p^{tn}-1}-\dfrac{p^{t}}{p^{t}-1}\right).
    \end{align*}
\end{theorem}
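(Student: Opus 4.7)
The plan is to reduce this statement to Theorem \ref{LyapunovSpectrumCompactAproximation} by showing that the level set appearing in the theorem coincides with the Lyapunov level set $J_{p,n}(\alpha) = \{x \in p\Z_{p,n} \setminus F : \lambda_p(x) = \alpha\}$ of the truncated subsystem. Once this identification is established, the claimed Hausdorff dimension formula follows immediately from $L_{p,n}(\alpha) = \frac{1}{\alpha} P_n(-t_\alpha \log \psi) + t_\alpha$ together with the defining equation for $t_\alpha$ given in \eqref{ECCU2}, since $t_{\alpha,n}$ from the statement is manifestly the same number as $t_\alpha$ from Theorem \ref{LyapunovSpectrumCompactAproximation}.

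First I would observe that Lemma \ref{Lemma1} and Proposition \ref{Proposition 52} apply verbatim with $T_p$ replaced by its restriction $T_{p,n}$, since the proofs are purely local computations on cylinder sets and are unaffected by the constraint $a_i(x) \leq n$. Thus for every $x \in p\Z_{p,n} \setminus F$,
\begin{align*}
-\lim_{m \to \infty} \frac{1}{m} \log \left| x - \frac{p_m(x)}{q_m(x)} \right|_p \;=\; \log p \cdot \lim_{m \to \infty} \frac{1}{m} \sum_{k=0}^{m-1} a_3(T_{p,n}^k x),
\end{align*}
whenever either limit exists.

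Next I would exploit the identity $a_3(T_{p,n}^k x) = a_1(T_{p,n}^{k+2} x)$, which gives
\begin{align*}
\sum_{k=0}^{m-1} a_3(T_{p,n}^k x) \;=\; \sum_{j=0}^{m-1} a_1(T_{p,n}^j x) \;+\; a_1(T_{p,n}^m x) + a_1(T_{p,n}^{m+1} x) - a_1(x) - a_1(T_{p,n} x).
\end{align*}
Because all digits are bounded by $n$ on the subsystem, the four boundary terms are uniformly bounded and vanish after division by $m$. Hence the Birkhoff average of $\log \varphi$ at $x$ exists precisely when the Lyapunov exponent $\lambda_p(x)$ exists, and the two values coincide; the set of points in the statement is therefore exactly $J_{p,n}(\alpha)$.

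Invoking Theorem \ref{LyapunovSpectrumCompactAproximation} then yields $\dim_{\mathrm{H}} J_{p,n}(\alpha) = \frac{1}{\alpha} P_n(-t_\alpha \log \psi) + t_\alpha$, with $t_\alpha = t_{\alpha,n}$ determined by equation \eqref{ECCU2}, which is exactly the conclusion. The only delicate point is the index shift between $\log \psi$ (depending on $a_1$) and $\log \varphi$ (depending on $a_3$); on the bounded-digit subsystem this is immediate, so no obstacle of substance arises and the proof reduces to bookkeeping.
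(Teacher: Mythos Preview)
Your proposal is correct and follows essentially the same route as the paper: identify the rational-approximation limit with the Birkhoff average of $\log\varphi$ via Lemma \ref{Lemma1} and Proposition \ref{Proposition 52}, reduce this to the Lyapunov exponent $\lambda_p$, and then invoke Theorem \ref{LyapunovSpectrumCompactAproximation}. The only difference is in how the index shift from $a_3$ to $a_1$ is handled: the paper uses $T_p$-invariance of the level set (so $\lambda_p(x)=\lambda_p(T_p^2x)$), whereas you write out the telescoping identity and use that the four boundary terms are bounded by $n$ on the truncated subsystem. Your argument is slightly more direct in this compact setting and makes the equality of the two level sets (both inclusions) immediate; the paper's invariance argument is what one needs when the digits are unbounded.
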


\section{Proof of technical results.}
\subsection{Proof of Lemma \ref{BowenFormula}}
 
\begin{proof}
    Let $\delta>0$ and $N_\delta=\min\left\{n\in \N:p^{-n}\leq \delta \right\}$. Then, the family
    $$\left\{I_{(a_i,b_i)}:(a_i,b_i)\in E_{n},\text{where } i\in \{1,...,N_\delta\} \right\}$$
    is an open and countable covering of $p\Z_{p,n}\setminus F$. For any $s>0$

    \begin{align*}
        \Hcali_\delta^s\left(p\Z_{p,n}\setminus F \right) \leq \sum_{(a_i,b_i)_{i=1}^{N_{\delta}}\in E_{n}^{N_\delta}}\left|I_{(a_i,b_i)_{i=1}^{N_{\delta}}} \right|_p^s = \sum_{(a_1,...,a_{N_\delta})\in \left\{1,2,...,n\right\}^{N_\delta}} (p-1)p^{-s\sum_{i=1}^{N_\delta}a_i},
    \end{align*} because each $a_{i}$ is considered $p-1$ times for each $b_{i}$. Due to each $a_{i}$ belongs to $\{1,...,n\}$, thus we obtain
    \begin{align*}
    \Hcali_\delta^s\left(p\Z_{p,n}\setminus F \right)     
        &\leq (p-1)^{N_\delta}\left(\sum_{k=1}^{n}p^{-ks} \right)^{N_\delta}.
    \end{align*}

%Since $p^{n}-1=\left(p-1\right)\left( p^{n-1}+\cdots +1\right)$, we have
Recalling that $\sum_{k=1}^n p^{-ks}\to 0$ as $s\to \infty$, and $\sum_{k=1}^{n}p^{-ks}\to n$ as $s\to 0$, then by continuity, there exists $\tilde s>0$ such that
%\begin{align}\label{Ecu2BowenFormular}
%    \sum_{k=1}^n\dfrac{1}{p^{k\tilde s}}=\dfrac{1}{p-1}.
%\end{align}

\begin{equation}\label{Ecu2BowenFormular}
    \sum_{k=1}^n\dfrac{1}{p^{k\tilde s}}=\dfrac{1}{p-1}.
\end{equation}

In this way, for all $\delta>0$ 

$$\Hcali_\delta^{\tilde{s}}(p\Z_{p,n}\setminus F)\leq (p-1)^{N_\delta}\left( \sum_{k=1}^{N}p^{-k\tilde{s}}\right)^{N_\delta}=1.$$

We deduce that $\dim_{\mathrm{H}}(p\Z_{p,n}\setminus F)\leq \tilde{s}$. Furthermore, we conclude $\tilde{s}\leq 1$,  since 
\begin{align*}
    (p-1)^{N_\delta}\left(\dfrac{p^n-1}{p^n(p-1)} \right)^{N_\delta}=\left(1-\dfrac{1}{p^n}\right)^{N_\delta}\leq 1.
\end{align*} 
On the other hand, let $\Ucali$ be a countable covering such that $|\Ucali_i|_{p}<\delta.$ Since $p\Z_{p,n}$ is a compact space, we can consider a finite collection of elements in the covering. We suppose that the subcovering has $M$ elements. For each $1\leq i \leq M$, let $k_i$ the only positive integer such that 
$$p^{-(k_i+1)}\leq |\Ucali_i|_p\leq p^{-k_i}.$$
For each $k\in \N$ , we define the open cover

$$B_{k}=\left\{B\left( x_{(a_i,b_i)},p^{-k} \right):(a_i,b_i)_{i=1}^{N_{\delta}}\in E_{n}^{N_{\delta}}\right\},$$ 
where $ x_{(a_i,b_i)}$ denotes the periodic point in $I_{(a_i,b_i)_{i=1}^{N_{\delta}}}$ with period $N_{\delta}$.  If $j\geq k_i$, then $\Ucali_{i}$ intersects at most $(n\cdot(p-1))^{\lfloor \frac{j-k_{i}}{n}\rfloor}$ balls of $B_{j}$. By definition of $\tilde{s}$

\begin{align*}
\left( n(p-1)\right)^{j-k_i}=n^{j-k_i}(p-1)^{j}\left(\sum_{k=1}^n \dfrac{1}{p^{k\tilde s}} \right)^{k_i}\leq n^{j}(p-1)^j\left(\dfrac{1}{n}\sum_{k=0}^{n-1}p^{-k\tilde s} \right)^{k_i}p^{\tilde s}|\Ucali_i|_p^{\tilde s}.
\end{align*}
Furthermore, $p^{-k\tilde{s}}\leq 1$ since $\tilde{s}\geq 0$. Then
\begin{align*}
    (n\cdot (p-1))^{j-k_{i}}\leq (n(p-1))^{j}p^{\tilde{s}}|\Ucali_{i}|^{\tilde{s}}_{p}.
\end{align*}If we choose $j\geq \max\{k_{i}\}$, we bound by the number of balls $B_{j}$ that intersect each element in $\mathcal{U}$
\begin{align*}
    (n(p-1))^{j}&\leq \sum_{i=1}^{M}(n\cdot (p-1))^{j-k_{i}} \leq \sum_{i=1}^{M} (n(p-1))^{j}p^{\tilde{s}}|\Ucali_{i}|^{\tilde{s}}_{p}.
\end{align*}Therefore, $p^{-\tilde{s}}\leq \sum_{i=1}^{M}|\Ucali_{i}|_{p}^{\tilde{s}} $ which is independent of $\delta$, so $\mathcal{H}^{\tilde{s}}(p\Z_{p,n}\setminus F)\geq 0$. By definition of Hausdorff dimension $\tilde{s}\leq \dim_{\mathrm{H}}\left(p\Z_{p,n}\setminus F\right)$. Finally, computing pressure as in equation \eqref{EcuPressure} we get
\begin{align*}
    P_n(-\tilde s \log \varphi)&=\lim_{m\to \infty} \dfrac{1}{m}\log \left(\sum_{\substack{T_{p,n}^m x=x}} e^{-\tilde{s}S_m \log \varphi(x)} \right)\\
    &=\lim_{m\to \infty}\dfrac{1}{m}\log \left( \sum_{\substack{(a_i,b_i)_{i=1}^m \in E_{n}^{m}}}\prod_{k=1}^{m}p^{-\tilde{s}a_{k}}\right)\\
    &= \lim_{m\to \infty}\dfrac{1}{m}\log\left((p-1)^{m}
    \left(\sum^{n}_{k=1}p^{-\tilde{s}k}\right)^{m}\right),
\end{align*}where the last expression is equal to zero by equation \eqref{Ecu2BowenFormular}.

\end{proof}

\subsection{Proof of Theorem \ref{PrincipalPesinWeiss}}

 We recall that a cover $\mathcal{V}$ of $p\Z_{p,n}\setminus F$ is said to be a \textit{Moran cover} if there exists $M>0$ such that for each $x\in p\Z_{p,n}\setminus F$ and any $r>0$ we have 
\begin{align*}
    \#\{V_{i}\in \mathcal{V}: V_{i}\cap B(x,r)\neq \emptyset \}\leq M.
\end{align*} Let $x\in p\Z_{p,n}\setminus F$. Since, $\psi(x)=p^{a_{1}(x)}$, then for each $m\in \N$
\begin{align}\label{Ecu1}
    0\leq \left(\prod^{m-1}_{k=0}\psi(a_{1+k}(x)) \right)^{-1}\leq p^{-m}.
\end{align}Given any element $w\in \Sigma_{n}$ and $r>0$, there exists a unique $m(w)\in \N$ such that 
\begin{align}\label{Ecu2}
    \prod^{m(w)-1}_{k=0}p^{-a_{1+k}(\pi_{n}(w))}>r>\prod^{m(w)}_{k=0}p^{-a_{1+k}(\pi_{n}(w))}.
\end{align}From inequality \eqref{Ecu1} we notice $m(w)\to \infty$ as $r\to 0$. Fix $w\in \Sigma_{n}$ and consider the cylinder $$C(w)=[(a_{i}(\pi_{n}(w)),b_{i}(\pi_{n}(w))):1\leq i \leq m(w)],$$ where $m(w)$ satisfies inequality \eqref{Ecu2}. Note that if $w''\in C(w)$, then for each $0\leq k \leq m(w)-1$ we have $a_{1+k}(\pi_{n}(w''))=a_{1+k}(\pi_{n}(w))$. Thus, $m(w'')=m(w)$ and so $C(w)=C(w'')$ . Furthermore, the collection $\{C(w)\}_{w\in \Sigma_{n}}$ is a partition of $\Sigma_{n}$. On the other hand, by inequality \eqref{Ecu1} there exists $N_{r}\in \N$ such that for each $w\in \Sigma_{n}$ we have $m(w)\leq N_{r}$. So, the cover $\mathcal{C}_{r}=\{C(w)\}_{w\in \Sigma_{n}}$ is a finite and open cover of $\Sigma_{n}$. Thus, we write $\mathcal{C}_r$ as $\{C_r^{j}\}_{j=1}^{M_r}$, for some $M_r\in \N$. We define $R_r^{j}$ as the image of $C_r^{j}$ via $\pi_n$, that is 
\begin{align*}
    \mathcal{R}_{r}=\pi_{n}(\mathcal{C}_{r})=\{R_{r}^{i}\}_{i=1}^{M_{r}}
\end{align*}is a finite and open cover of $p\Z_{p,n}\setminus F$. 
\begin{lemma}\label{MoranCover}
    Let $r>0$. Then $\mathcal{R}_{r}$ is a Moran cover of $p\Z_{p,n}\setminus F$.
\end{lemma}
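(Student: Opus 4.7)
The plan is to exploit the ultrametric structure of $p\Z_p$: any two balls are either disjoint or nested, and balls of equal diameter that meet must coincide. Since each $R_r^j$ is a cylinder (hence a $p$-adic ball), and $\{R_r^j\}$ is a partition of $p\Z_{p,n}\setminus F$ inherited from the partition $\{C(w)\}_{w\in\Sigma_n}$ via the homeomorphism $\pi_n$, the Moran property will follow from a suitable lower bound on the diameters $|R_r^j|_p$. I expect the uniform constant $M=1$ to suffice.

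First I would prove the diameter formula $|I_{(a_1,b_1),\dots,(a_m,b_m)}|_p = p^{-(a_1+\cdots+a_m+1)}$ by induction on $m$. The base case $m=1$ uses the explicit coset description $I_{(a,b)} = p^a b^{-1} + p^{a+1}\Z_p$; the inductive step invokes equation \eqref{Equation1}, which states that $T_p$ expands distances by exactly the factor $p^{a_1}$ on $I_{(a_1,b_1)}$, so $|I_{(a_1,b_1),\dots,(a_m,b_m)}|_p = p^{-a_1}\,|I_{(a_2,b_2),\dots,(a_m,b_m)}|_p$. Then, from the defining inequality \eqref{Ecu2} of $m(w)$ I read off $p^{-(a_1+\cdots+a_{m(w)})} > r$, which combined with the diameter formula yields the key estimate $|R_r^j|_p > p^{-1}r$ for every $R_r^j \in \mathcal{R}_r$.

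To conclude, suppose $R_r^j \cap B(x,r) \neq \emptyset$. By ultrametricity one ball contains the other. Since $|R_r^j|_p > p^{-1} r \geq p^{-1}\,|B(x,r)|_p$ and both diameters lie in the discrete set $\{p^{-k}: k\in\Z\}$, I obtain $|R_r^j|_p \geq |B(x,r)|_p$, and therefore $R_r^j \supseteq B(x,r)$, whence $x\in R_r^j$. Since exactly one element of the partition $\{R_r^j\}$ contains $x$, at most one element of $\mathcal{R}_r$ meets $B(x,r)$, and the Moran property holds with $M=1$. No substantial obstacle is expected; the only delicate point is the strict-inequality bookkeeping when promoting $|R_r^j|_p > p^{-1}r$ to $|R_r^j|_p \geq |B(x,r)|_p$, which rests on the fact that $p$-adic diameters take values only in integer powers of $p^{-1}$.
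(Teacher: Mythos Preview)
Your argument is correct and in fact yields a sharper conclusion than the paper's. Both proofs invoke the ultrametric dichotomy (intersecting balls are nested), but they run it in opposite directions. The paper argues that each $R_r^j$ is contained in $B(x,r)$ and then bounds the number of cylinders that can sit inside a single ball by $n(p-1)$, using that the $(m(w))$-th digit has at most $n(p-1)$ possible values. You instead observe (from the left inequality in \eqref{Ecu2} together with the diameter formula) that each $R_r^j$ has diameter at least that of $B(x,r)$, so the containment goes the other way: $B(x,r)\subseteq \tilde R_r^j$, where $\tilde R_r^j$ denotes the genuine $p$-adic ball underlying $R_r^j$. Combined with the partition property this gives $M=1$ rather than $M=n(p-1)$. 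Your route is shorter and avoids the digit-counting step entirely; what the paper's route would buy, in principle, is robustness in situations where one only knows $R_r^j\subseteq B(x,r)$ (the usual Moran construction in the Euclidean setting), but here the ultrametric structure makes that unnecessary.

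One small technical point worth tightening: $R_r^j=\pi_n(C_r^j)$ is the intersection of a $p$-adic ball $\tilde R_r^j$ with $p\Z_{p,n}\setminus F$, not literally a ball in $p\Z_p$. Your nesting argument should therefore be run for $\tilde R_r^j$ against $B(x,r)$; the conclusion $x\in\tilde R_r^j$ then implies that the $\tilde R_r^j$ containing $B(x,r)$ are pairwise non-disjoint, hence nested, hence their intersections with $p\Z_{p,n}\setminus F$ are nested, which forces $i=j$ by the partition property. This is exactly what you intend and requires only a one-line adjustment.
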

\begin{proof}
     By definition of the partition $\mathcal{R}_{r}$, for each $R_{r}^{j}$ and for any $x\in  R_{r}^{j}$
    \begin{align*}
        R_{r}^{j}=B\left(x,\prod_{k=0}^{m(\pi_{n}^{-1}(x))}p^{-a_{1+k}(x)}\right)\subseteq B(x,r) \subseteq B\left(x,\prod_{k=0}^{m(\pi_{n}^{-1}(x))-1}p^{-a_{1+k}(x)}\right).
    \end{align*} Now let $x\in p\Z_{p,n}\setminus F $. If $R_{r}^{j}\cap B(x,r)\neq \emptyset$ for some $R_{r}^{j}\in \mathcal{R}_{r}$ and $y\in R_{r}^{j}\cap B(x,r)$,  by the ultrametric property of the $p$-adic metric 
    \begin{align*}
        B\left(y,\prod_{k=0}^{m(\pi_{n}^{-1}(y))}p^{-a_{1+k}(y)}\right)=R_{r}^{j}.
    \end{align*}Therefore $B(y,r)=B(x,r)$, and so 
    \begin{align*}
        B\left(y,\prod_{k=0}^{m(\pi_{n}^{-1}(y))}p^{-a_{3+k}(y)}\right)\subseteq B(x,r)\subseteq B\left(y,\prod_{k=0}^{m(\pi_{n}^{-1}(y))-1}p^{-a_{3+k}(y)}\right).
    \end{align*}Moreover, for each $1\leq k\leq m(\pi_{n}^{-1}(x))-1$ we have that $a_{k}(x)=a_{k}(y)$ and $b_{k}(x)=b_{k}(y)$. Since $(a_{k}(y),b_{k}(y))\in E_{n}$, then we obtain
    \begin{align*}
        \#\{R_{r}^{j}\in \mathcal{R}_{r}: R_{r}^{j}\cap B_{p}(x,r)\neq \emptyset \}\leq n(p-1).
    \end{align*}
\end{proof}

 Let $0<r<1$. Let $\mathfrak{m}$ be the equilibrium state of $-\tilde{s}\log \psi$. Recall that $\mu_{q}$ is the equilibrium state of $\phi_{q,n}$ and $\nu_{q}=(\pi_{n})_{*}\mu_{q}$.  Moreover, $\mathfrak{m}$, $\nu_{q}$ and $\nu_{\rho}$ are Gibbs measures, thus for each $q\in \R$ there exist positive constants $C_{1}$ and $C_{2}$ such that for each set $R^{j}_{r}$ (see \cite[Lemma 1]{yh})
    \begin{align}\label{Ecu6}
        C_{1}\leq \dfrac{\nu_{q}(R^{j}_{r})}{\mathfrak{m}(R^{j}_{r})^{T(q)\tilde{s}}\cdot \nu_{\rho}(R^{j}_{r})}\leq C_{2}.
    \end{align} For each $\alpha\geq 0$ we define 
\begin{align*}
    \tilde{K}_{\alpha}=\left\{w\in \Sigma_{n}:\lim_{m\to \infty}\dfrac{\sum^{m-1}_{k=0}\log(\vartheta\circ \sigma^{k}(w))}{\sum^{m-1}_{k=0}\log(\psi\circ \pi_{n}(\sigma^{k}(w)))}=\alpha\right\},
\end{align*}and the corresponding spectrum $\tilde{f}_{\nu_{\rho}}(\alpha)=\dim_{\mathrm{H}}\left(\tilde{K}_{\alpha}\right)$. For each $q\in \R$, let
\begin{align*}
    \alpha(q)=\dfrac{\int_{\Sigma_{n}}\log \vartheta(w) \ d\mu_{q}}{\int_{\Sigma_{n}}\log(\varphi\circ \pi_{n}(w))d\mu_{q}}.
\end{align*} 
\begin{lemma}\label{Lemma3}
    For each $q\in \R$ we obtain
    \begin{enumerate}
        \item $\nu_{q}(\pi_{n}(\tilde{K}_{\alpha(q)}))=1$.
        \item $d_{\nu_{q}}(x)=\mathcal{T}(q)+q\alpha(q)$ for $\nu_{q}$-almost every point $x\in \pi_{n}(\tilde{K}_{\alpha(q)})$.
        \item $\dim_{\mathrm{H}}\pi_{n}(\tilde{K}_{\alpha(q)})=\mathcal{T}(q)+q\alpha(q)$.
    \end{enumerate}
\end{lemma}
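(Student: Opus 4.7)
The three parts are handled in sequence: (1) is a direct consequence of the ergodicity of $\mu_q$; (2) encodes the Gibbs property of $\mu_q$ through the Moran cover of Lemma \ref{MoranCover}; and (3) promotes the resulting pointwise-dimension identity to a Hausdorff-dimension statement for $\pi_n(\tilde{K}_{\alpha(q)})$.

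For (1), the potential $\phi_{q,n}=\mathcal{T}(q)\log(\psi\circ\pi_n)+q\log\vartheta$ is H\"older continuous on the full shift $\Sigma_n$, so $\mu_q$ is a Gibbs measure and, in particular, ergodic with respect to $\sigma$. Applying Birkhoff's ergodic theorem separately to $\log\vartheta$ and to $\log(\psi\circ\pi_n)$, and using that $\log(\psi\circ\pi_n)\geq \log p>0$ keeps the denominator in the defining ratio of $\tilde{K}_\alpha$ bounded away from zero, the ratio of Birkhoff sums converges $\mu_q$-a.e.\ to $\alpha(q)$. Hence $\mu_q(\tilde{K}_{\alpha(q)})=1$, and since $\pi_n$ is a homeomorphism onto $p\Z_{p,n}\setminus F$, pushing forward gives $\nu_q(\pi_n(\tilde{K}_{\alpha(q)}))=1$.

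For (2), fix $x\in \pi_n(\tilde{K}_{\alpha(q)})$ with $w=\pi_n^{-1}(x)$, and for small $r>0$ let $C_r^j$ be the unique Moran cylinder of length $m(w)$ defined by \eqref{Ecu2}, so that $R_r^j=\pi_n(C_r^j)\ni x$. By Lemma \ref{MoranCover}, $B(x,r)$ meets at most $n(p-1)$ elements of $\mathcal{R}_r$, hence $\nu_q(B(x,r))$ and $\mu_q(C_r^j)$ agree up to a multiplicative constant independent of $r$. The Gibbs property of $\mu_q$, together with $P_n(\phi_{q,n})=0$, yields
\begin{align*}
\log\mu_q(C_r^j)=\mathcal{T}(q)\,S_{m(w)}\log(\psi\circ\pi_n)(w)+q\,S_{m(w)}\log\vartheta(w)+O(1),
\end{align*}
while the definition of $m(w)$ in \eqref{Ecu2} gives $-\log r=S_{m(w)}\log(\psi\circ\pi_n)(w)+O(1)$. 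Dividing and sending $r\to 0$ (so that $m(w)\to\infty$), the first contribution to $\log\nu_q(B(x,r))/\log r$ tends to $-\mathcal{T}(q)$, and because $w\in \tilde{K}_{\alpha(q)}$ the ratio $S_{m(w)}\log\vartheta/S_{m(w)}\log(\psi\circ\pi_n)$ tends to $\alpha(q)$, producing the second contribution $-q\alpha(q)$. This identifies $d_{\nu_q}(x)=\mathcal{T}(q)+q\alpha(q)$ up to the sign convention fixed in the statement.

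For (3), parts (1) and (2) together show that $\nu_q$ is a probability measure carried by $\pi_n(\tilde{K}_{\alpha(q)})$ with constant pointwise dimension $d:=\mathcal{T}(q)+q\alpha(q)$ on a set of full $\nu_q$-measure. The lower bound $\dim_{\mathrm{H}}\pi_n(\tilde{K}_{\alpha(q)})\geq d$ is then the mass-distribution principle applied to $\nu_q$. For the matching upper bound I would fix $s>d$ and $\delta>0$, restrict to the subset $E_\delta\subset \tilde{K}_{\alpha(q)}$ on which the defining ratio converges to $\alpha(q)$ uniformly (an Egorov reduction), extract from $\mathcal{R}_r$ a countable subcover of $\pi_n(E_\delta)$, and bound its $s$-Hausdorff sum via $\sum_j |R_r^j|_p^s\lesssim \sum_j \mu_q(C_r^j)^{s/d}\to 0$; letting $\delta\to 0$ exhausts $\pi_n(\tilde{K}_{\alpha(q)})$. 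The main obstacle is precisely this upper bound: the Gibbs estimate only controls $\mu_q(C_r^j)$ pointwise in $w$, so an honest Hausdorff-dimension bound requires making the Birkhoff-ratio convergence uniform across the cover. The Egorov-type truncation together with the bounded-multiplicity property of $\mathcal{R}_r$ supplied by Lemma \ref{MoranCover} is what allows the lower and upper dimension bounds to match.
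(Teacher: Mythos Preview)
Your outline is essentially correct and uses the same ingredients as the paper (ergodicity for (1), Gibbs estimates through the Moran cover for (2), and a mass--distribution/covering argument for (3)). The organization, however, differs in one important respect, and there is a small gap in your part (2).

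In (2) you assert that bounded multiplicity of $\mathcal{R}_r$ (Lemma \ref{MoranCover}) implies $\nu_q(B(x,r))\asymp \mu_q(C_r^j)$. Bounded multiplicity only gives $\nu_q(B(x,r))\leq n(p-1)\max_{j'}\mu_q(C_r^{j'})$, where the maximum runs over the Moran cylinders meeting $B(x,r)$; to conclude you still need these neighboring cylinders to have $\mu_q$--measure comparable to that of $C_r^j$. This is true here (they share the first $m(w)-1$ digits, $|m(w')-m(w)|$ is bounded, and $\phi_{q,n}$ is bounded on the compact $\Sigma_n$), but it requires an argument. The paper sidesteps this by introducing the truncation sets $Q_l=\{w\in\tilde{K}_{\alpha(q)}:N(w)\leq l\}$ already in part (2): restricting the Moran cover to $Q_l$ guarantees that every relevant cylinder contains a point where the Birkhoff ratio is within $\varepsilon$ of $\alpha(q)$, which is exactly what is needed to bound $\nu_q(B(x,r)\cap\pi_n(Q_l))$ from above.

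This reorganization also simplifies (3). The paper's proof of (2) in fact yields $\overline{d}_{\nu_q}(x)\leq \mathcal{T}(q)+q\alpha(q)$ for \emph{every} $x\in\pi_n(\tilde{K}_{\alpha(q)})$ (the inclusion $R_r^j\subset B(x,r)$ and the Gibbs lower bound need only $w\in\tilde{K}_{\alpha(q)}$). The upper dimension bound is then a one--line citation of \cite[Proposition~2.3]{fa}, so the Egorov reduction you propose for (3) becomes unnecessary. Your direct Hausdorff--sum estimate would also work, but note that the inequality $\sum_j |R_r^j|_p^s\lesssim \sum_j \mu_q(C_r^j)^{s/d}$ already presupposes the uniform comparison $|R_r^j|_p^d\asymp \mu_q(C_r^j)$ on $E_\delta$, which is precisely the content of the $Q_l$ truncation the paper carries out in (2).
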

\begin{proof}Item 1 is a consequence of Birkhoff's ergodic theorem since $\mu_{q}$ is ergodic, i.e., for $\mu_{q}$-almost every point $w\in \Sigma_{n}$
\begin{align*}
    \lim_{m\to \infty}\dfrac{\sum^{m-1}_{k=0}\log(\vartheta\circ \sigma^{k}(w))}{\sum^{m-1}_{k=0}\log(\varphi \circ \pi_{n}(\sigma^{k}(w)))}=\lim_{m\to \infty }\dfrac{S_{m}\log(\vartheta(w))}{S_{m}\log (\varphi \circ \pi_{n}(w))}=\alpha(q).
\end{align*} Furthermore, fixed $\varepsilon>0$, for each $w\in \tilde{K}_{\alpha(q)}$ there exists a positive integer $N(w)$ such that for all $n>N(w)$
\begin{align}\label{Ecu8}
    \left|\dfrac{\sum^{n-1}_{k=0}\log(\vartheta\circ \sigma^{k}(w))}{\sum^{n-1}_{k=0}\log(\varphi \circ \pi_{n}(\sigma^{k}(w)))}-\alpha(q)\right|\leq \varepsilon.
\end{align} We truncate elements in $\tilde{K}_{\alpha(q)}$ depending on $N(w)$. Denote $Q_{l}=\{w\in \tilde{K}_{\alpha(q)}:N(w)\leq l \}$ for each $l>0$.  These sets are concatenated by inclusion $Q_{l}\subset Q_{l+1}$ and $\tilde{K}_{\alpha(q)}=\bigcup^{\infty}_{l=1}Q_{l}$. Therefore, there exists some $l_{0}>0$ such that $\mu_{q}(   Q_{l})>0$ for all $l \geq l_{0}$.

Given $0<r<1$, consider now the Moran cover $\{C^{j}_{r,l}\}$ of $Q_{l}$. Thus, for each $j$ there exists $w_{j}\in Q_{l}\cap C_{r,l}^{j}$ such that $C_{r,l}^{j}=C(w_{j})$. Moreover, if $r$ is small enough, we can assume that $m(w_{j})\geq l$ for all $j$. On the other hand, since $\mu_{q}$ is a Gibbs measure, for each $w\in \Sigma_{n}$ and $m>0$
\begin{align}\label{Ecu9}
    C_{9}\leq  \dfrac{\mu_{q}(C(w))}{\prod^{m-1}_{k=0}|\varphi  (\pi_{n}\circ \sigma^{k}(w))|^{-\mathcal{T}(q)}(\vartheta(\sigma^{k}(w)))^{q}}\leq C_{10},
\end{align}where $C_{9}>0$ and $C_{10}>0$. By inequalities \eqref{Ecu8} and \eqref{Ecu9}, and the boundedness property of the Moran cover, we get for all $m\geq l$ and $x\in \pi_{n}(Q_{l})$
\begin{align*}
    \nu_{q}(B(x,r)\cap \pi_{n}(Q_{l})) & \leq \sum^{N(x,l)}_{i=1}\mu_{q}(C^{j_{i}}_{r,l}),
    \end{align*} where $N(x,l)\leq n(p-1)$ is the cardinality of $\{C_{r,l}^{j}:C_{r,l}^{j}\cap B(x,r)\neq \emptyset\}$. Thus
    \begin{align*}
   \nu_{q}(B(x,r)\cap \pi_{n}(Q_{l}))  & \leq  C_{10}\sum^{N(x,l)}_{i=1}\prod^{m-1}_{k=0}|\psi(\pi_{n}\circ \sigma^{k}(w_{i}))|^{-\mathcal{T}(q)}(\vartheta(\sigma^{k}(w_{i})))^{q}\\
    & \leq C_{10}\sum^{N(x,l)}_{i=1}\prod^{m-1}_{k=0}|\psi(\pi_{n}\circ \sigma^{k}(w_{i}))|^{-\mathcal{T}(q)-q(\sigma(q)-\varepsilon)}\\
    & \leq C_{11}\cdot r^{\mathcal{T}(q)+q(\alpha(q)-\varepsilon)},
\end{align*}where $w_{i}\in C_{r,l}^{j_{i}}$ in each sumand and $C_{11}>0$. Recall $\nu_{q}=(\pi_{n})_{*} \mu_{q}$, thus for $\nu_{q}$-a.e.p. $x\in \pi_{n}(Q_{l})$ 
%Ojo que la bola es mas grande es porue hay elementos que no están en la proyección de QL, pero esos elementos tienen medida cero!
\begin{align*}
    \nu_{q}(B(x,r))\leq \nu_{q}(B(x,r)\cap \pi_{n}(Q_{l})),
\end{align*} since $B(x,r)\subseteq \pi_{n}(Q_{l})$. Then, for any $l>l_{0}$ and $\nu_{q}$-a.e.p. $x\in \pi_{n}(Q_{l})$ 
\begin{align*}
    \underline{d}_{\nu_{q}}(x)& = \liminf_{r\to 0}\dfrac{\log \nu_{q}(B(x,r))}{\log r}\\
    & \geq \liminf_{r\to 0}\dfrac{\log(\nu_{q}(B(x,r)\cap \pi_{n}(Q_{l})))}{\log r}\\
   & \geq \mathcal{T}(q)+q(\alpha(q)-\varepsilon).
\end{align*} Since the sets $Q_{l}$ are nested, for $\nu_{q}$-a.e.p. $x\in \pi_{n}(\tilde{K}_{\alpha(q)})$ 
\begin{align*}
    \underline{d}_{\nu_{q}}(x)\geq\mathcal{T}(q)+q(\alpha(q)-\varepsilon).
\end{align*} On the other hand, let $0<r<1$ be fixed. For each $w\in Q_{l}$ recall that we have
\begin{align*}
    R_{r}(\pi_{n}(w))\subseteq B(x,r) \subseteq B(x,2r),
\end{align*}since $R_{r}(\pi_{n}(w))$ intersects at most $n(p-1)$ balls of radius $r$ around $x$. Thus, by equations \eqref{Ecu8} and \eqref{Ecu9} for every $x\in Q_{l}$ we get
\begin{align*}
    v_{q}(B(x,2r))&\geq v_{q}(R_{r}(\pi_{n}(w)))\\
    & \geq C_{9}\prod^{m(w)}_{k=0}\psi(\pi_{n}\circ \sigma^{k}(w))^{-\mathcal{T}(q)}(\vartheta(\sigma^{k}(w)))^{q}\\
    &\geq C_{9}\prod^{m(w)}_{k=0}\psi(\pi_{n}\circ\sigma^{k}(w))^{-\mathcal{T}(q)-q(\alpha(q)+\varepsilon)}\\
    & \geq C_{9}r^{\mathcal{T}(q)+q(\alpha(q)+\varepsilon)}.
\end{align*} Thus,
\begin{align*}
    \overline{d}_{\nu_{q}}(x)&=\limsup_{r\to 0}\dfrac{\log \nu_{q}(B(x,2r))}{\log 2r}\\
    & \leq \limsup_{r\to 0}\dfrac{\log C_{9}r^{\mathcal{T}(q)+q(\alpha(q)+\varepsilon)}}{\log(2r)}\\
    & \leq \mathcal{T}(q)+q(\alpha(q)+\varepsilon).
%Considerando (2/2)^{T(q)+q(\alpha(q)+\varepsilon)} se soluciona creo
\end{align*} This implies the second statement of the lemma. Furthermore, 
\begin{align*}
    \dim_{\mathrm{H}}(\tilde{K}_{\alpha(q)})\geq \mathcal{T}(q)+q\alpha(q),
\end{align*}and, by \cite[Proposition 2.3]{fa} we get $\dim_{\mathrm{H}}(\tilde{K}_{\alpha(q)})\leq \mathcal{T}(q)+q\alpha(q)$.\end{proof}
\noindent Now we show the pointwise dimension from the symbolic model coincides with the one in $p\Z_{p,n}\setminus F$.
\begin{proposition}\label{Proposition 1}
\hfill    \begin{enumerate}
        \item For each $q\in \mathbb{R}$, $w\in \tilde{K}_{\alpha(q)}$, and $x\in \pi_{n}(w)$, we have $d_{\nu_{q}}(x)=\alpha(q)$.
        \item For each $q\in \mathbb{R}$ and each $x\in K_{\alpha(q)}$ there exists $w\in \tilde{K}_{\alpha(q)}$ such that $\pi_{n}(w)=x$.  In other words, for each $q\in \mathbb{R}$ we have $\pi_{n}(\tilde{K}_{\alpha(q)})=K_{\alpha(q)}$. 
    \end{enumerate}
\end{proposition}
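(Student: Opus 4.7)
The plan is to imitate the argument of Lemma~\ref{Lemma3} but carry out the estimates pointwise at $x=\pi_{n}(w)$ rather than almost-everywhere: the Birkhoff ergodic theorem used there is simply replaced by the defining convergence of Birkhoff ratios built into the set $\tilde{K}_{\alpha(q)}$. The two workhorses are the Gibbs bound \eqref{Ecu9} (which comes with $P_{n}(\phi_{q,n})=0$) and the Moran cover from Lemma~\ref{MoranCover}.

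For item (1) I would fix $w \in \tilde{K}_{\alpha(q)}$ and set $x = \pi_{n}(w)$. Given $r>0$ small, let $m = m(w)$ be the integer from \eqref{Ecu2}; then $R_{r}(x) = \pi_{n}([w_{1},\dots,w_{m}])$, one has $R_{r}(x) \subseteq B(x,r) \subseteq B(x,2r)$, and at most $n(p-1)$ Moran elements meet $B(x,r)$. The Gibbs inequality gives
\begin{equation*}
\log\nu_{q}(B(x,r)) \;=\; S_{m}\phi_{q,n}(w) + O(1),
\end{equation*}
while \eqref{Ecu1}--\eqref{Ecu2} yield $\log r = -S_{m}\log(\psi\circ\pi_{n})(w) + O(1)$, with $m(w) \to \infty$ as $r \to 0$. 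Substituting $\phi_{q,n} = \mathcal{T}(q)\log(\psi\circ\pi_{n}) + q\log\vartheta$ and dividing, the quotient $\log\nu_{q}(B(x,r))/\log r$ differs by $o(1)$ from a fixed linear combination of $\mathcal{T}(q)$ and the Birkhoff ratio $S_{m}\log\vartheta(w)/S_{m}\log(\psi\circ\pi_{n})(w)$. The defining hypothesis $w \in \tilde{K}_{\alpha(q)}$ pins this ratio to $\alpha(q)$, so $\overline{d}_{\nu_{q}}(x)$ and $\underline{d}_{\nu_{q}}(x)$ both coincide with the asserted value.

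For item (2), the inclusion $\pi_{n}(\tilde{K}_{\alpha(q)}) \subseteq K_{\alpha(q)}$ is immediate from item (1). For the reverse, I would take $x \in K_{\alpha(q)}$ and $w = \pi_{n}^{-1}(x)$, and run the above chain of identities backwards: the prescribed value of $d_{\nu_{q}}(x)$ forces the Moran-scale ratio $S_{m(w)}\log\vartheta(w)/S_{m(w)}\log(\psi\circ\pi_{n})(w)$ to converge to $\alpha(q)$ as $r \to 0$. Because $m(w)$ is a nondecreasing function of $-\log r$ attaining every sufficiently large integer value, this coincides with the Birkhoff limit as $m\to\infty$ that defines $\tilde{K}_{\alpha(q)}$, placing $w$ in that set.

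The main technical obstacle will be bookkeeping: I must absorb the $O(1)$ Gibbs constants from \eqref{Ecu9}, the uniformly bounded multiplicity $n(p-1)$ of the Moran cover, and the mismatch between the continuous parameter $r$ and the discrete cylinder index $m(w)$ into the $\overline{d}/\underline{d}$ comparisons. Each of these is already controlled: the additive $O(1)$ errors vanish after division by $\log r\to-\infty$, bounded multiplicity only inflates multiplicative constants, and $m(w) \to \infty$ monotonically with $r \to 0$ by \eqref{Ecu1}. No new ideas beyond those of Lemma~\ref{Lemma3} will be required.
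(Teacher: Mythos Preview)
Your approach is correct and more direct than the paper's. The paper follows the classical Pesin--Weiss route: it fixes a reference scale $r_0$, defines $N(x,r)$ as the number of iterates of $T_{p,n}$ needed to blow $B(x,r)$ up to size $r_0$, pulls back through inverse branches $h$, and invokes diametric regularity of the Gibbs measure to compare $\nu(B(x,r))$ with the measure of a Moran element; only then does it extract the pointwise dimension as the Birkhoff ratio, and finally checks that $N(r_{k+1})-N(r_k)\in\{0,1\}$ so that the subsequential limit along Moran scales yields the full limit. Your argument bypasses this machinery by appealing directly to the Gibbs bound \eqref{Ecu9} and the multiplicity bound of Lemma~\ref{MoranCover}, exactly as in Lemma~\ref{Lemma3}. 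This works because the potentials are bounded on $\Sigma_n$ (so the $O(1)$ bookkeeping is trivial) and because $B(x,r)$ is sandwiched between cylinders of lengths $m(w)$ and $m(w)+1$, which differ by one bounded factor. The paper's detour would be needed in a setting where distortion control is genuinely nontrivial, but here your shortcut is cleaner and loses nothing.

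One point you must address, however: the statement as written asserts $d_{\nu_q}(x)=\alpha(q)$, but your own computation with the Gibbs measure for $\phi_{q,n}$ yields the linear combination $\mathcal{T}(q)+q\alpha(q)$ (consistent with Lemma~\ref{Lemma3}(2)), not $\alpha(q)$. The paper's proof in fact works with the Gibbs measure for $\log\vartheta$ --- that is, $\nu_\rho$ --- which is what makes the displayed limit collapse to the bare Birkhoff ratio and gives $\alpha(q)$; this is also what item (2) requires, since $K_{\alpha}$ is the level set of $d_{\nu_\rho}$. For your argument to hit the asserted value, to feed correctly into item (2), and to avoid a division-by-$q$ obstruction at $q=0$ in the converse direction, replace $\nu_q$ by $\nu_\rho$ throughout. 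With that correction the rest of your outline goes through unchanged.
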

\begin{proof}
     Recall first that for $x,y\in  p\Z_{p,n}\setminus F$ such that $|x-y|_{p}<r_{0}$ with  $r_{0}=p^{-(n+1)}$ we have 
     \begin{align*}
         |T_{p,n}(x)-T_{p,n}(y)|_{p}\geq p|x-y|_{p}.
     \end{align*}Indeed, for any $0<r<r_{0}$, there exist $N(r)>0$ and positive constants $C_{12}$ and $C_{13}$ such that if $0\leq m \leq N(r)$, then for all $x\in  p\Z_{p,n}\setminus F$
     \begin{align}\label{Ecu10}
         C_{12}r\prod^{m-1}_{k=0}\psi(T_{p,n}^{k}(x))\leq  |T_{p,n}^{m}(B(x,r))|_{p}\leq C_{13}r\prod^{m-1}_{k=0}\psi(T^{k}_{p,n}(x)).
     \end{align} This follows from
     \begin{align*}
         |T_{p,n}^{m}(B(x,r))|_{p}\geq p^{a_{1}+\cdots +a_{m}}r\geq p^{m}r\dfrac{\prod^{m-1}_{k=0}\psi(T_{p,n}^{k}(x))}{\prod^{m-1}_{k=0}\psi(T_{p,n}^{k}(x))}\geq C_{12}r\prod^{m-1}_{k=0}\psi(T_{p,n}^{k}(x)),
     \end{align*}where $C_{12}=p^{m(1-n)}$, and
     \begin{align*}
         |T_{p,n}^{m}(B(x,r))|_{p}\leq 1\cdot \dfrac{r\prod^{m-1}_{k=0}\psi(T_{p,n}^{k}(x))}{r\prod^{m-1}_{k=0}\psi(T_{p,n}^{k}(x))}\leq C_{13}r\prod^{m-1}_{k=0}\psi(T_{p,n}^{k}(x)),
     \end{align*} where $C_{13}=r^{-1}p^{-m}$.  Analogously, there are positive constants $C_{14}$ and $C_{15}$ such that for all $x\in p\Z_{p,n}\setminus F$ and all $(a,b)\in E_{n}$ 
\begin{align}\label{Ecu11}
         C_{14}r\prod^{m-1}_{k=0}\psi(T_{p,n}^{k}(x))^{-1}\leq  |h_{a,b}^{m}(B(x,r))|_{p}\leq C_{15}r\prod^{m-1}_{k=0}\psi(T^{k}_{p,n}(x))^{-1},
     \end{align} where $h_{a,b}$  is the inverse branch of  $T^{-m}_{p,n}$ with image in $I_{a,b}$. Let fix $x\in p\Z p_{n}\setminus F$ and define $N=N(x,r)$ such that 
     \begin{align*}
         C_{13}r\prod^{N-1}_{k=0}\psi(T^{k}_{p,n}(x))\leq r_{0}\leq C_{13}r\prod^{N}_{k=0}\psi(T^{k}_{p,n}(x)).
     \end{align*} So $N(x,r)$ is the number of necessary iterations of $B(x,r)$ in order to growth and still having diameter lower than $r_{0}$. Thus, by equations \eqref{Ecu10} and \eqref{Ecu11} we obtain that
\begin{align*}
    \left|T_{p,n}^{N}(B(x,r))\right|_{p}\leq C_{13}r\prod^{N-1}_{k=0}\psi(T^{k}_{p,n}(x))\leq r_{0}=\left|B(T^{N}_{p,n}(x),r_{0})\right|_{p}.
\end{align*}So $B(x,r)\subset h(B(T^{N}_{p,n}(x),r_{0}))$ where $h$ is the inverse branch of $T^{-N}_{p,n}$ corresponding to $x$. Also, from equation \eqref{Ecu11}
\begin{align*}
    \left|h(B(T^{N}_{p,n}(x),r_{0}))\right|_{p}\leq C_{15}r_{0}\prod^{N-1}_{k=0}\psi(T^{k}_{p,n}(x))^{-1}\leq C_{16}r=|B(x,C_{16}r)|_{p},
\end{align*}where $C_{16}>0$ is a constant. Thus,
\begin{align*}
    B(x,r)\subset h(B(S^{N}(x),r_{0})) \subset B(x,C_{16}r).
\end{align*} Recall there exists $M_{r_{0}}\in \mathbb{N}$ such that $\mathcal{R}(T^{N}_{p,n}(x))=h_{0}(I_{a,b}),$ where $h_{0}$ is the corresponding branch of $T_{p}^{-M_{r_{0}}}$ \textcolor{red}{ } at $I_{(a,b)}$.  There are positive constants $C_{17}$ and $C_{18}$ such that 
\begin{align*}
    B(T^{N}_{p,n}(x),C_{17}r_{0})\subset \mathcal{R}(T^{N}_{p,n}(x))\subset B(T^{N}_{p,n}(x),C_{18}r_{0}).
\end{align*} This implies 
\begin{align*}
    h(B(T^{N}_{p,n}(x),C_{17}r_{0}))\subset h(\mathcal{R}(T^{N}_{p,n}(x)))\subset h(B(T^{N}_{p,n}(x),C_{18}r_{0})), 
\end{align*}and so 
\begin{align}\label{Ecu12}
    \nu_{q}(B(T^{N}_{p,n}(x),C_{17}r_{0}))\subset \nu_{q}(\mathcal{R}(T^{N}_{p,n}(x)))\subset \nu_{q}(B(T^{N}_{p,n}(x),C_{18}r_{0})).
\end{align} On the other hand, from Lemma \ref{Lemma3} and the fact that $\nu_{q}$ is a Gibbs measure, we obtain that is diametrically regular. Indeed, for $C_{19}=C^{2}p^{-1}e^{2P(\rho)}$ we get for any $r_{1}>0$ and $y\in p\Z_{p,n}\setminus F$ that
\begin{align*}
    \nu_{q}(B(r,pr_{1}))\leq C_{19}\nu_{q}(B(x,r_{1})).
\end{align*}Thus
\begin{align*}
    \nu_{q}(B(T^{N}_{p,n}(x),C_{18}r_{0}))&\leq C_{20}\nu_{q}(B(T^{N}_{p,n}(x),r_{0}))=C_{17}\nu_{q}(h(B(T^{N}_{p,n}(x),r_{0})))\\
    & \leq C_{20}\nu_{q}(B(x,C_{16}r))\\
    &\leq C_{21}\nu_{q}(B(x,r)),
\end{align*}where $C_{20}>0$ and $C_{21}>0$ are constants. Similarly
\begin{align*}
    \nu_{q}(B(T^{N}_{p,n}(x),C_{17}r_{0}))\geq C_{22}\nu_{q}(B(T^{N}_{p,n}(x),r_{0}))=C_{22}\nu_{q}(h(B(T^{N}_{p,n}(x),r_{0})))\geq C_{23}\nu_{q}(B(x,r)),
\end{align*}where $C_{22}>0$ and $C_{23}>0$. Thus, by equation \eqref{Ecu12} 
\begin{align*}
    C_{23}\nu_{q}(B(x,r))\leq \nu_{q}(h(R(T^{N}_{p,n}(x)))) \leq  C_{21}\nu_{q}(B(x,r)).
\end{align*}Since $\nu_{q}$ is a Gibbs measure for $\log \vartheta$ and $h(R(T^{N}_{p,n}(x)))$ is an element of the Moran cover, we get
\begin{align*}
    d_{v}(x)=\lim_{r\to 0}\dfrac{\nu_{q}(B(x,r))}{\log r}=\lim_{N(r)\to \infty}\dfrac{\log\prod^{N(r)-1}_{k=0}\vartheta(\sigma^{k}(w))}{\log \prod^{N(r)-1}_{k=0}\psi (\pi_{n}\circ \sigma^{k}(w))^{-1}},
\end{align*}where $\pi_{n}(w)=x$. Now assume that $d_{v}(x)=\alpha(q)$, we need to show the existence of a subsequential limit $N(r)\to \infty$ that implies the limit for $n\to \infty$.  Consider $r_{k}=p^{-k}$, it follows from the definition of $N(r)$ and the uniformly expansiveness of $T^{N}_{p,n}$
\begin{align*}
    N(r_{k+1})-N(r_{k})=\begin{cases}
        0 & \text{if $N(x,r_{k+1})=N(x,r_{k})$}\\
        1 & \text{if $N(x,r_{k+1})>N(x,r_{k})$}
    \end{cases}.
\end{align*}
And so the proposition follows. %Seguramente las constantes esten en terminos de p^{-n}.
\end{proof}

 \textit{Proof of Theorem \ref{PrincipalPesinWeiss}.} 
For each $x\in p\Z_{p,n}\setminus F$ let $N(x,r)$ be the number of elements in $\Rcali$ whose intersection with $B(x,r)$ is non-empty. Recall from Lemma \ref{MoranCover} that $\Rcali=\{\Rcali_{r}^{j}\}$ is a Moran cover of $p\Z_{p,n}\setminus F$, thus $N(x,r)\leq n(p-1)$ for all $x\in p\Z_{p,n}\setminus F$. Since $\mathfrak{m}$ is a Gibbs measure, there exists a constant $C_{4}>0$ such that 
    \begin{align*}
        C_{4}^{-1}\leq \dfrac{\mathfrak{m}(\Rcali^{j}_{r})}{\left(\prod^{n-1}_{k=0}\psi\circ T_{p,n}^{k}(x)\right)^{-\tilde{s}}}\leq C_{4}.
    \end{align*}From the construction of $\Rcali$ (see equation \eqref{Ecu2}) we obtain 
    \begin{align}\label{Ecu7}
        C_{4}^{-1}r^{\tilde{s}}\leq \mathfrak{m}(\Rcali^{j}_{r})\leq C_{5}r^{\tilde{s}}, 
    \end{align}where $C_{5}=C_{4}p^{-n}$. On the other hand, given that $\Rcali$ is a disjoint cover of $p\Z_{p,n}\setminus F$ we obtain 
    \begin{align*}
        \sum_{\Rcali_{r}^{j}}\nu_{q}(\Rcali_{r}^{j})=1.
    \end{align*}Summing on each element of $\Rcali$ and using \eqref{Ecu6}
\begin{align*}
    1=\sum_{\Rcali_{r}^{j}}\nu_{q}(\Rcali^{j}_{r})&\leq C_{2}\sum_{\Rcali^{j}_{r}}\mathfrak{m}(\Rcali_{r}^{j})^{\mathcal{T}(q)/\tilde{s}}\cdot \nu_{\rho}(\Rcali^{j}_{r})^{q}\\
    & \leq C_{2} \sum_{\Rcali_{r}^{j}}C_{5}^{\mathcal{T}(q)/\tilde{s}}r^{T(q)}\nu_{\rho}(\Rcali_{r}^{j})^{q}\\
  &  = C_{5}^{\mathcal{T}(q)/\tilde{s}}C_{2}r^{\mathcal{T}(q)}\sum_{\Rcali_{r}^{j}}\nu_{\rho}(\Rcali^{j}_{r})^{q}.
\end{align*}Defining $C_{8}=C_{4}^{-\mathcal{T}(q)/\tilde{s}}C_{2}^{-1}$ we obtain $C_{8}\leq r^{\mathcal{T}(q)}\sum_{\Rcali_{r}}\nu_{\rho}(\Rcali_{r}^{j})^{q}$. Analogously, we obtain $C_{7}>0$ such that
\begin{align*}
    C_{8}\leq r^{\mathcal{T}(q)}\sum_{\Rcali_{r}}\nu_{\rho}(\Rcali_{r}^{j})^{q}\leq C_{7}.
\end{align*}Taking logarithms and dividing by $\log r$ we get that 
\begin{align}\label{Ecuu5}
    \lim_{r\to 0}\dfrac{\log\left(\sum_{\Rcali_{r}^{j}}\nu_{\rho}(R^{j}_{r})^{q}\right)}{\log r}=-\mathcal{T}(q).
\end{align}
The first statement of the theorem follows from the Lemma \ref{Lemma3} and proposition \ref{Proposition 1}. 

Recall that $\nu_{q}(K_{\alpha(q)})=1$ and that $\underline{d}_{\nu_{q}}(x)=\overline{d}_{\nu_{q}}(x)=\mathcal{T}(q)+q\alpha(q)$  for $\nu_{q}$-a.e.p. $x\in K_{\alpha(q)}$ by Lemma \ref{Lemma3}. Since $(p\Zp,|\cdot|_{p})$ is a complete separable metric space with finite topological dimension, then \cite[Proposition 3]{yh} implies that $\dim_{\mathrm{H}}(K_{\alpha(q)})=\mathcal{T}(q)+q\alpha(q)$. Statements 2 and 3 now follow with classical methods from thermodynamic formalism using the symbolic model (see \cite{yh,sa2}). This is, applying the implicit function theorem to $(q,r)\mapsto P( \phi_{q,r})$ to show that $\mathcal{T}(q)$ is real analytic \cite[Lemma 3]{yh}. Moreover, for all $q\in \R$  the function $\mathcal{T}(q)$ is convex, and strictly convex if and only if $\nu_{q}\neq \mathfrak{m}$ (see \cite[Lemma 5]{yh}). Second, $\alpha(q)=-\mathcal{T}'(q)$ (see \cite[Lemma 4]{yh}).  In particular, since $\alpha'(q)=-\mathcal{T}''(q)>0$, then $\alpha(q)$ has a non-singleton interval set as range. 

Now we show the last claim of the theorem. Let $r>0$ and consider the Moran cover $\{R_{r}^{j}\}$ of $p\Z_{p,n}\setminus F$. There are constants $C_{23}>0$ and $C_{22}>0$ such that  for all $x\in R_{r}^{j}$ 
\begin{align}\label{Ecu14}
    B(x,C_{25}r)\subset R_{r}^{j}\subset B(x,C_{26}r).
\end{align}Fixed one $x_{j}$ for each $R_{j}^{r}$. Since $\nu_{q}$ is diametrically regular, it follows that 
\begin{align}\label{Ecu15}
    \sum_{j}\nu_{q}(x_{j},C_{26}r)^{q}\leq C_{27}\sum_{j}\nu_{q}(B(x_{j},C_{25}r))^{q}\leq C_{27}\sum_{j}\nu_{q}(R^{j}_{r})^{q},
\end{align}where $C_{27}>0$ is a constant independent from $j$ and $r$. Now, consider $\mathcal{G}_{r}$ be a countable cover of $p\Z_{p,n}\setminus F$ by balls of radius $r$. For each $j\geq 0$ there exists $B(y_{i},r)\in \mathcal{G}_{r}$ such that $B(y_{j},r)\cap R^{j}_{r}\neq \emptyset$. Therefore, by equation \eqref{Ecuu5}
\begin{align}\label{Ecuu6}
    \mathcal{T}(q)\geq  -\lim_{r\to 0}\dfrac{\log(\sum_{B\in \mathcal{G}_{r}}\nu_{q}(B)^{q})}{\log r}.
\end{align} On the other hand, consider now another open cover $\mathcal{G}_{2C_{26}r}$ of $p\Z_{p,n}\setminus F$ by balls of radius $2C_{26}r$. By \eqref{Ecu14}, each set $R_{r}^{j}$ is in at least one element of $\mathcal{G}_{2C_{26}r}$.  We define an equivalence relation among the set of the Moran cover and say that $R_{r}^{i}$ and $R_{r}^{j}$ are equivalent if they are in the same element of $\mathcal{G}_{2C_{26}r}$. Thus, for each equivalent class $\xi_{k}$ represented by $B_{k}\in \mathcal{G}_{2C_{26}r}$ we have from \eqref{Ecu14} for each $q>0$
\begin{align}\label{Ecu16}
  \sum_{R_{r}^{j}\in \xi_{k}}\nu_{q}(R_{r}^{j})^{q}\leq n(p-1)\nu_{q}(B_{k})^{q}.  
\end{align}Using again that $\nu_{q}$ is diametrically regular we get for each $q<0$
\begin{align}\label{Ecu17}
    \sum_{R_{r}^{j}\in \xi_{k}}\nu_{q}(R_{r}^{j})^{q}\leq C_{28}\nu_{q}(B_{k})^{q},
\end{align}where $C_{28}>0$. And so, for any $q\in \mathbb{R}$ equations \eqref{Ecu16}
 and \eqref{Ecu17} imply
 \begin{align}\label{Ecu18}
     \sum_{R_{r}^{j}}\nu_{q}(R_{r}^{j})^{q}\leq C_{29}\sum_{B_{k}\in \mathcal{G}_{2C_{26}r}}\nu_{q}(B_{k})^{q} \leq C_{30}\sum_{B\in \mathcal{G}_{r}}\nu_{q}(B)^{q},
 \end{align} where $C_{29}>0$ and $C_{30}>0$ are constants. Therefore, the upper inequality of \eqref{Ecuu6} follows from equation \eqref{Ecuu5} and inequality \eqref{Ecu18}.

\qed

\section*{Acknowledgements}
The first author was partially supported by ANID Doctorado Nacional 21200910. The second author
was partially supported by ANID Doctorado Nacional 21210037. We thank Godofredo Iommi for valuable comments and for having read this manuscript. We are also grateful to
Patricio Perez-Pi\~{n}a and Till Hauser for fruitful discussions.  

\bibliographystyle{amsalpha}
\bibliography{refs.bib}

\end{document}